\title{Stable Centres of Iwahori-Hecke Algebras of type A}
\author{Christopher Ryba}
\address{Department of Mathematics, University of California, Berkeley, CA 94720, USA}
\email{ryba@math.berkeley.edu}
\DeclareMathOperator{\Id}{Id}
\newcommand{\FH}{\mathrm{FH}}
\newtheorem{theorem}{Theorem}[section]
\newtheorem{lemma}[theorem]{Lemma}
\newtheorem{proposition}[theorem]{Proposition}
\newtheorem{corollary}[theorem]{Corollary}
\newtheorem{definition}[theorem]{Definition}
\newtheorem{remark}[theorem]{Remark}
\newtheorem{example}[theorem]{Example}
\begin{document}
\maketitle
\begin{abstract}
A celebrated result of Farahat and Higman constructs an algebra $\mathrm{FH}$ which ``interpolates'' the centres $Z(\mathbb{Z}S_n)$ of group algebras of the symmetric groups $S_n$. We extend these results from symmetric group algebras to type $A$ Iwahori-Hecke algebras, $H_n(q)$. In particular, we explain how to construct an algebra $\mathrm{FH}_q$ ``interpolating'' the centres $Z(H_n(q))$. We prove that $\mathrm{FH}_q$ is isomorphic to $\mathcal{R}[q,q^{-1}] \otimes_{\mathbb{Z}} \Lambda$ (where $\mathcal{R}$ is the ring of integer-valued polynomials, and $\Lambda$ is the ring of symmetric functions). The isomorphism can be described as ``evaluation at Jucys-Murphy elements'', leading to a proof of a conjecture of Francis and Wang. This yields character formulae for the Geck-Rouquier basis of $Z(H_n(q))$ when acting on Specht modules. 
\end{abstract}

\section{Introduction}
\noindent
Let $H_n(q)$ be the Iwahori-Hecke algebra of type $A_{n-1}$, which may be viewed as a $q$-deformation of the group algebra of the symmetric group $S_n$. In this paper we extend stability results about centres of symmetric group algebras to centres of Iwahori-Hecke algebras. When $q=1$, so that $H_n(q)$ specialises to $\mathbb{Z}S_n$, we recover prior results of Farahat-Higman \cite{FarahatHigman}, Corteel-Goupil-Schaeffer \cite{CorteelGoupilSchaeffer}, and the author \cite{Ryba}. 
\newline \newline \noindent
Let us describe these results in the more familiar symmetric group case. The centre $Z(\mathbb{Z}S_n)$ has a basis consisting of conjugacy-class sums. Farahat and Higman proved that the multiplicative structure constants of this basis are evaluations of certain polynomials, and used these polynomials to define an algebra $\mathrm{FH}$ which canonically surjects onto $Z(\mathbb{Z}S_n)$ for any $n$ \cite{FarahatHigman}. It turns out that $\mathrm{FH}$ is isomorphic to $\mathcal{R} \otimes_{\mathbb{Z}} \Lambda$ where $\mathcal{R}$ is the ring of integer-valued polynomials and $\Lambda$ is the ring of symmetric functions. Moreover, this isomorphism can be interpreted as evaluation at Jucys-Murphy (``JM'') elements (see Section 3 of \cite{Ryba}). The relationship between JM elements and Gelfand-Zetlin bases of Specht modules leads to a short proof of the Nakayama Conjecture which determines the $p$-blocks of the symmetric group. One also obtains formulae for central characters of symmetric groups in terms of contents of partitions \cite{CorteelGoupilSchaeffer}.
\newline \newline \noindent
In the present paper, we explain how to extend all these facts about symmetric groups to Iwahori-Hecke algebras. Although $H_n(q)$ is not manifestly a group algebra, its centre still has a basis analogous to conjugacy class sums: the Geck-Rouquier basis. M\'{e}liot proved a polynomiality property for the multiplicative structure constants of the Geck-Rouquier basis which generalises the symmetric group case \cite{Meliot_Hecke}. This allows us to define an algebra $\mathrm{FH}_q$, which canonically surjects onto $Z(H_n(q))$ for each $n$. We check that $\mathrm{FH}_q$ degenerates to $\mathrm{FH}$ when $q=1$. Also we show that $\mathrm{FH}_q$ is isomorphic to $\mathcal{R}[q,q^{-1}] \otimes_{\mathbb{Z}} \Lambda$ via evaluation at JM elements, which proves a conjecture of Francis and Wang \cite{FrancisWang}. We use this to prove character formulae for Geck-Rouquier elements of Iwahori-Hecke algebras in terms of $q$-contents of partitions. Along the way, we reprove the characterisation of blocks of $H_n(q)$ using $\mathrm{FH}_q$.
\newline \newline \noindent
The structure of the paper is as follows. In Section \ref{sec:background}, we review properties of the ring of symmetric functions, $\Lambda$, as well as the Farahat-Higman algebra $\mathrm{FH}$. In Section \ref{section:Hecke_alg}, we discuss Iwahori-Hecke algebras and the Geck-Rouquier basis of $Z(H_n(q))$. In Section \ref{sec:FHq} we define the main object of the paper, $\mathrm{FH}_q$, check that it specialises to $\mathrm{FH}$ when $q=1$, and show that it is isomorphic to $\mathcal{R}[q,q^{-1}] \otimes_{\mathbb{Z}} \Lambda$ via JM evaluation. Finally in Section \ref{section:applications}, we explain how this theory determines the blocks of the Iwahori-Hecke algebra (over a field of arbitrary characteristic) and how we obtain character formulae for Geck-Rouquier basis elements in terms of the contents of partitions.
\newline \newline \noindent
\textbf{Acknowledgements.} The author would like to thank Weiqiang Wang for helpful comments.

\section{Background} \label{sec:background}
\subsection{Symmetric Functions}
\noindent
We recall some generalities about partitions and the ring of symmetric functions, $\Lambda$; everything we need can be found in Chapter 1 of \cite{Macdonald}.
\newline \newline \noindent
Recall that a partition is a sequence of non-negative integers $\lambda = (\lambda_1, \lambda_2, \ldots, \lambda_l)$ which is non-increasing. We do not distinguish between partitions that differ only by trailing zeros. The $\lambda_i$ are called the parts of $\lambda$. The size of a partition $\lambda$ is $|\lambda| = \lambda_1 + \lambda_2 + \cdots + \lambda_l$, and its length $l(\lambda)$ is the number of nonzero parts of $\lambda$.
\newline \newline \noindent
There is an action of $S_n$ on $\mathbb{Z}[x_1, x_2, \ldots, x_n]$ by permuting the polynomial variables. We write $\Lambda_n = \mathbb{Z}[x_1, x_2, \ldots, x_n]^{S_n}$ for the invariants with respect to this action. So we may think of $\Lambda_n$ as polynomials that are symmetric in $n$ variables. If $m > n$, there is a map $\rho_{m,n}: \Lambda_m \to \Lambda_n$ which sets the the variables $x_{n+1}, x_{n+2}, \ldots, x_{m}$ to zero. These maps define an inverse system, and the inverse limit in the category of graded rings is the ring of symmetric functions, $\Lambda$. It is convenient to view $\Lambda$ as the set of ``polynomials'' in infinitely many variables $x_1, x_2, \ldots$ that are symmetric in these variables, such as $e_1 = x_1 + x_2 + \cdots$. (The use of the term ``polynomial'' is not strictly correct since such expressions are not finite sums.)
\newline \newline \noindent
By formal properties of inverse limits, $\Lambda$ inherits a map to each $\Lambda_n$ which amounts to setting the variables $x_{n+1}, x_{n+2}, \ldots$ to zero. We obtain a genuine symmetric polynomial in $n$ variables, which we may evaluate at a sequence of $n$ numbers. Since the polynomial is symmetric, the order of the terms in the sequence does not matter. In Section \ref{section:applications}, we will make use of this fact, and write $f(S)$ to mean the evaluation of the symmetric function $f$ at the $n$-element multiset $S$.
\newline \newline \noindent
There are two families of symmetric functions that appear in this paper. One family consists of the monomial symmetric functions, $m_\lambda$ ($\lambda$ a partition). We define $m_\lambda$ to be the sum of all monomials $x_1^{a_1} x_2^{a_2} \cdots$, where the exponents $a_i$ yield the partition $\lambda$ when sorted from largest to smallest. The $m_\lambda$ form a $\mathbb{Z}$-basis of $\Lambda$. The second family are the elementary symmetric functions $e_r = m_{(1^r)}$, which are the coefficients of the following generating function:
\[
\sum_{r \geq 0} e_r t^r = \prod_{i \geq 1}(1 + x_i t).
\]
We will use the fact that $\Lambda$ is the free polynomial algebra in the elementary symmetric functions: $\Lambda = \mathbb{Z}[e_1, e_2, \ldots]$. Note that $m_\lambda$ is homogeneous of degree $|\lambda|$ and $e_r$ is homogeneous of degree $r$.

\subsection{The Farahat-Higman algebra}
\noindent
We review the classical results of Farahat and Higman \cite{FarahatHigman}. For a more detailed discussion including applications to modular representation theory, see Section 3 of \cite{Ryba}.
\newline \newline \noindent
Conjugacy classes of symmetric groups $S_n$ are labelled by partitions of $n$ via cycle type. In order to compatibly label conjugacy classes of symmetric groups of different sizes, (so that, for example, transpositions have the same label regardless of which symmetric group they belong to), we use the following definition.

\begin{definition}
The \emph{reduced cycle type} of an element of a symmetric group is the partition obtained by subtracting $1$ from each part of the cycle type.
\end{definition}
\noindent
The identity element of $S_n$ has cycle type $(1^n)$, but its reduced cycle type is the empty partition, $\varnothing$. Similarly, a transposition in $S_n$ has cycle type $(2,1^{n-1})$ and reduced cycle type $(1)$. There are elements of reduced cycle type $\mu$ in $S_n$ precisely when $n \geq |\mu| + l(\mu)$. The reduced cycle type allows us to easily work with all symmetric groups at once. Let $X_\mu$ be the sum of all elements of $S_n$ of reduced cycle type $\mu$; $X_\mu$ is either the sum of all elements in a conjugacy class, or zero if $S_n$ has no elements of reduced cycle type $\mu$. 
\begin{example} \label{eqn:FH_alg_example}
We have the equation
\[
X_{(1)}^2 = 2X_{(1,1)} + 3X_{(2)} + \binom{n}{2}X_{\varnothing}.
\]
This equation hold in $Z(\mathbb{Z}S_n)$ for any $n \in \mathbb{Z}_{\geq 0}$. 
\end{example}
\noindent
What is essential is that the coefficients in the Example \ref{eqn:FH_alg_example} are polynomials in $n$. This turns out to hold for arbitrary products of $X_\mu$.

\begin{definition}
The \emph{ring of integer-valued polynomials}, $\mathcal{R}$, is the subring of $\mathbb{Q}[t]$ consisting of polynomials $p(t)$ such that $p(m) \in \mathbb{Z}$ for all $m \in \mathbb{Z}$.
\end{definition}
\noindent
A standard fact is that $\mathcal{R}$ consists of $\mathbb{Z}$-linear combinations of binomial coefficients $\binom{t}{r}$ (in partucular, $\mathcal{R}$ is free as a $\mathbb{Z}$-module).

\begin{theorem}[Farahat-Higman, Theorem 2.2 \cite{FarahatHigman}] \label{FH_symmetric_group_polynomials}
For each triple of partitions $\lambda, \mu, \nu$, there is a unique integer-valued polynomial $\phi_{\mu, \nu}^\lambda(t) \in \mathcal{R}$ such that
\[
X_\mu X_\nu = \sum_\lambda \phi_{\mu, \nu}^\lambda(n)X_\lambda
\]
in $Z(\mathbb{Z}S_n)$ for all $n \in \mathbb{Z}_{\geq 0}$.
\end{theorem}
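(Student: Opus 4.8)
The plan is to realise each structure constant as a count of factorisations and then to separate that count into a ``local'' part of bounded complexity and a ``global'' part that records only \emph{where} in $\{1, \dots, n\}$ the action takes place; the global part is what produces the binomial coefficients witnessing membership in $\mathcal{R}$.

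First I would fix $n$ and a permutation $\sigma \in S_n$ of reduced cycle type $\lambda$, and note that the coefficient of $X_\lambda$ in $X_\mu X_\nu$ equals $N_n(\sigma) := \#\{(\pi,\tau) : \pi \text{ of reduced cycle type } \mu,\ \tau \text{ of reduced cycle type } \nu,\ \pi\tau = \sigma\}$, a quantity independent of the choice of $\sigma$ because conjugation permutes such factorisations. The structural observation is that $\mathrm{supp}(\sigma) \subseteq \mathrm{supp}(\pi) \cup \mathrm{supp}(\tau)$ (a point fixed by both $\pi$ and $\tau$ is fixed by $\sigma$), while $|\mathrm{supp}(\pi)| = |\mu| + l(\mu) =: a$ and $|\mathrm{supp}(\tau)| = |\nu| + l(\nu) =: b$; hence $U := \mathrm{supp}(\pi) \cup \mathrm{supp}(\tau)$ is a set of size at most $a+b$ containing $\mathrm{supp}(\sigma)$, which has size $|\lambda| + l(\lambda) =: c$.

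Next I would stratify $N_n(\sigma)$ according to the set $U$. For a fixed $U$ with $\mathrm{supp}(\sigma) \subseteq U$ and $|U| = m$, the number of factorisations $\pi\tau = \sigma$ of the prescribed reduced cycle types with $\mathrm{supp}(\pi) \cup \mathrm{supp}(\tau) = U$ depends only on $m$ (and on $\lambda, \mu, \nu$), not on $n$ nor on the particular embedding of $U$ into $\{1,\dots,n\}$: relabelling identifies the problem with the analogous count inside $S_U$, and $\sigma|_U$ is determined up to $S_U$-conjugacy by $\lambda$ and $m$. Call this non-negative integer $d_m = d_m(\lambda,\mu,\nu)$; it vanishes unless $c \leq m \leq a+b$. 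Since the number of $U$ with $|U| = m$ and $U \supseteq \mathrm{supp}(\sigma)$ is $\binom{n-c}{m-c}$, we obtain for every $n \geq c$ the identity
\[
N_n(\sigma) = \sum_{m = c}^{a+b} d_m \binom{n-c}{m-c}.
\]
I would then \emph{define} $\phi_{\mu,\nu}^\lambda(t) := \sum_{m=c}^{a+b} d_m \binom{t-c}{m-c}$, which lies in $\mathcal{R}$ since each $\binom{t-c}{m-c}$ is integer-valued on $\mathbb{Z}$.

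Finally I would verify that this polynomial works for all $n \geq 0$, not just $n \geq c$. When $n \geq c$ we have just shown $\phi_{\mu,\nu}^\lambda(n) = N_n(\sigma)$ is the genuine structure constant; when $n < c$ the element $X_\lambda$ vanishes in $Z(\mathbb{Z}S_n)$, so that summand contributes nothing whatever the value of $\phi_{\mu,\nu}^\lambda(n)$. Summing over $\lambda$ recovers $X_\mu X_\nu = \sum_\lambda \phi_{\mu,\nu}^\lambda(n) X_\lambda$; the sum is finite because $d_m \neq 0$ forces $|\lambda| \leq |\lambda| + l(\lambda) \leq a+b$. Uniqueness follows from linear independence of the nonzero class sums $X_\lambda$ in each $Z(\mathbb{Z}S_n)$ together with the fact that a rational polynomial is determined by its values at all sufficiently large integers. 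The one point that genuinely needs care is the claim that the local count $d_m$ is independent of $n$ and of the embedding of $U$: this requires a clean formulation of the statement that factorisations ``supported in $U$'' are intrinsic to $S_U$, and the use of the exact condition $\mathrm{supp}(\pi) \cup \mathrm{supp}(\tau) = U$ (rather than $\subseteq U$) so that the stratification over $m$ is a genuine partition.
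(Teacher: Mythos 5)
Your proposal is correct, but there is nothing internal to compare it against: the paper states this result as a citation (Farahat--Higman, Theorem 2.2) and never proves it, so your argument stands as an independent proof. What you have written is essentially the ``partial permutation'' argument of Ivanov and Kerov, which the paper's own remark singles out as the approach M\'eliot adapted to the Hecke-algebra setting: identifying the structure constant with the factorisation count $N_n(\sigma)$, stratifying by the union of supports $U$, observing that the count with $\mathrm{supp}(\pi)\cup\mathrm{supp}(\tau)=U$ exactly (so the strata partition the factorisations) is intrinsic to $S_U$ and hence depends only on $|U|$, and then letting the $\binom{n-c}{m-c}$ choices of $U$ produce an element of $\mathcal{R}$. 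The delicate points are all handled: conjugating by a bijection $U\to U'$ fixing $\mathrm{supp}(\sigma)$ gives independence of the embedding; $d_m=0$ for $m<\max(a,b)$ together with the vanishing of $\binom{n-c}{m-c}$ for $m>n$ makes the identity correct even when $S_n$ contains no elements of type $\mu$ or $\nu$; the case $n<c$ is absorbed by $X_\lambda=0$; finiteness of the sum over $\lambda$ follows from $|\lambda|+l(\lambda)\le a+b$; and uniqueness follows since a polynomial in $\mathbb{Q}[t]$ is determined by its values on the infinitely many $n$ for which the relevant $X_\lambda$ are nonzero and linearly independent. Farahat and Higman's original proof reaches the same binomial-coefficient expansion by a more computational route; your version (and Ivanov--Kerov's formalisation of it) has the advantage of making the independence of $n$ conceptually transparent, which is exactly the feature that generalises to the Geck--Rouquier basis.
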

\noindent
This allows us to ``interpolate'' $Z(\mathbb{Z}S_n)$ with respect to the parameter $n$.

\begin{definition}[Farahat-Higman, Section 2 \cite{FarahatHigman}] \label{FH_alg_definition}
The \emph{Farahat Higman algebra}, $\mathrm{FH}$, is the $\mathcal{R}$-algebra defined as follows. It is free as a $\mathcal{R}$-module with basis $K_\mu$ indexed by partitions $\mu$, and is equipped with a bilinear multiplication defined by
\[
K_\mu K_\nu = \sum_{\lambda} \phi_{\mu, \nu}^\lambda(t) K_\lambda,
\]
where $\phi_{\mu, \nu}^\lambda(t) \in \mathcal{R}$ are the polynomials appearing in Theorem \ref{FH_symmetric_group_polynomials}.
\end{definition}
\noindent
Theorem \ref{FH_symmetric_group_polynomials} says that when we evaluate the polynomial variable $t$ at $n$, the $K_{\mu}$ have the same structure constants as $X_{\mu}$. So we obtain a canonical map $\mathrm{FH} \to Z(\mathbb{Z}S_n)$.

\begin{theorem}[Farahat-Higman, Theorem 2.4 \cite{FarahatHigman}] \label{thm:FH_spec_hom}
For each $n \in \mathbb{Z}_{\geq 0}$ there is a surjective ring homomorphism $\Phi_n: \FH \to Z(\mathbb{Z}S_n)$ defined by
\[
\Phi_n \left( \sum_{\mu} a_\mu(t) K_\mu \right) = \sum_{\mu} a_\mu(n) X_\mu,
\]
where $a_\mu(t) \in \mathcal{R}$, and $X_\mu$ is the sum of all elements of $S_n$ of reduced cycle type $\mu$ (or zero, if there are no such elements).
\end{theorem}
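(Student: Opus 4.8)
The plan is to check, in order, that $\Phi_n$ is well defined and additive, that it is multiplicative, that it preserves the identity, and that it is surjective; essentially all of the content is packaged into Theorem \ref{FH_symmetric_group_polynomials}, so the proof is a formal bookkeeping argument. First, recall that evaluation $\varepsilon_n \colon \mathcal{R} \to \mathbb{Z}$, $p(t) \mapsto p(n)$, is a ring homomorphism, and via $\varepsilon_n$ the ring $Z(\mathbb{Z}S_n)$ becomes an $\mathcal{R}$-algebra. Since $\mathrm{FH}$ is free as an $\mathcal{R}$-module on $\{K_\mu\}$, the formula in the statement is nothing but the unique $\mathcal{R}$-module homomorphism $\mathrm{FH} \to Z(\mathbb{Z}S_n)$ (linear along $\varepsilon_n$) sending $K_\mu \mapsto X_\mu$; in particular $\Phi_n$ is well defined and additive. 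Note that for all but finitely many $\mu$ we have $X_\mu = 0$ in $Z(\mathbb{Z}S_n)$ (precisely, $X_\mu \neq 0$ forces $|\mu| + l(\mu) \le n$), so no infinite-sum issue arises.

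For multiplicativity, observe that the product on $\mathrm{FH}$ and the product on $Z(\mathbb{Z}S_n)$ are both $\mathcal{R}$-bilinear (the latter via $\varepsilon_n$), and $\Phi_n$ is $\mathcal{R}$-linear, so it suffices to verify $\Phi_n(K_\mu K_\nu) = \Phi_n(K_\mu)\,\Phi_n(K_\nu)$ on basis elements. By Definition \ref{FH_alg_definition} and $\mathcal{R}$-linearity, $\Phi_n(K_\mu K_\nu) = \sum_\lambda \phi_{\mu,\nu}^\lambda(n) X_\lambda$, which is exactly $X_\mu X_\nu = \Phi_n(K_\mu)\,\Phi_n(K_\nu)$ by Theorem \ref{FH_symmetric_group_polynomials}. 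For the unit: $X_\varnothing$ is the identity of $S_n$, so $X_\varnothing X_\mu = X_\mu$ for every $n \ge 0$, whence $\phi_{\varnothing,\mu}^\lambda(n) = \delta_{\mu\lambda}$ for all $n$, and the uniqueness clause of Theorem \ref{FH_symmetric_group_polynomials} forces $\phi_{\varnothing,\mu}^\lambda(t) = \delta_{\mu\lambda}$; thus $K_\varnothing$ is the identity of $\mathrm{FH}$ and $\Phi_n(K_\varnothing) = X_\varnothing = 1$.

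Finally, surjectivity follows because the map sending a cycle type to its reduced cycle type is a bijection between cycle types occurring in $S_n$ and partitions $\mu$ with $|\mu| + l(\mu) \le n$; hence the conjugacy-class-sum $\mathbb{Z}$-basis of $Z(\mathbb{Z}S_n)$ is precisely $\{X_\mu : |\mu| + l(\mu) \le n\}$, and each such $X_\mu$ equals $\Phi_n(K_\mu)$ and so lies in the image. I do not anticipate a genuine obstacle here: the only points requiring (mild) care are reducing multiplicativity to basis elements via $\mathcal{R}$-bilinearity and pinning down that $K_\varnothing$ is the unit using the uniqueness of the polynomials $\phi_{\mu,\nu}^\lambda$.
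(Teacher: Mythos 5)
Your proposal is correct: well-definedness and additivity from freeness of $\mathrm{FH}$ over $\mathcal{R}$, multiplicativity on basis elements via Theorem \ref{FH_symmetric_group_polynomials}, the unit pinned down by the uniqueness of the $\phi_{\mu,\nu}^\lambda$, and surjectivity from the fact that the class sums $X_\mu$ with $|\mu|+l(\mu)\leq n$ form a basis of $Z(\mathbb{Z}S_n)$. The paper itself gives no proof (it cites Farahat--Higman, Theorem 2.4), and your formal bookkeeping argument is exactly the standard verification that citation stands in for, so there is nothing to correct.
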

\noindent
By using the maps $\Phi_n$, one can deduce properties of $\mathrm{FH}$ from those of $Z(\mathbb{Z}S_n)$. In particular, one can easily show that $\FH$ is a commutative, associative, unital $\mathcal{R}$-algebra.

\begin{remark}
A different approach to $\mathrm{FH}$ was given by Ivanov and Kerov \cite{IvanovKerov}. Their approach can be extended from symmetric groups to other settings. In particular, M\'{e}liot \cite{Meliot_Hecke} adapted this approach to prove a version of Theorem \ref{FH_symmetric_group_polynomials} for Iwahori-Hecke algebras (see Theorem \ref{thm:FH_hecke_polys} below). Another generalisation is to group algebras of finite classical groups in work of Kannan and the author \cite{KannanRyba}.
\end{remark}
\noindent
It turns out that $\mathrm{FH}$ is isomorphic go $\mathcal{R} \otimes_{\mathbb{Z}} \Lambda$, and moreover this isomorphism may be interpreted as evaluation at JM elements. We summarise this, and direct the reader to Section 3 of \cite{Ryba} for more details. The Jucys-Murphy (JM) elements $L_i$ in $\mathbb{Z}S_n$ are defined as a sum of transpositions:
\[
L_i = \sum_{j < i} (i,j).
\]
It is well-known that these elements commute pairwise, and moreover any symmetric function $f \in \Lambda$ evaluated at $L_1, L_2, \ldots, L_n$ yields a central element of $\mathbb{Z}S_n$. In the case $f = e_r$, it takes a convenient form.

\begin{theorem}[Jucys, Section 3 \cite{Jucys}] \label{thm:elem_JM}
We have
\[
e_r(L_1, \ldots, L_n) = \sum_{\mu \vdash r} X_\mu,
\]
where $e_r$ is the $r$-th elementary symmetric function, and $X_\mu$ is the sum of elements of reduced cycle type $\mu$ in $S_n$.
\end{theorem}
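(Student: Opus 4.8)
The plan is to show that both sides of the claimed identity satisfy the same recursion in $n$ (for each fixed $r$), and then to induct on $n$. Write $L_1, \dots, L_n$ for the Jucys--Murphy elements of $\mathbb{Z}S_n$, and recall the elementary identity
\[
e_r(L_1, \dots, L_n) = e_r(L_1, \dots, L_{n-1}) + e_{r-1}(L_1, \dots, L_{n-1})\, L_n,
\]
which follows by splitting the defining sum $\sum_{i_1 < \cdots < i_r} L_{i_1} \cdots L_{i_r}$ according to whether $i_r = n$. Here $L_n$ appears on the right, and $L_1, \dots, L_{n-1}$ are exactly the Jucys--Murphy elements of the subalgebra $\mathbb{Z}S_{n-1} \subseteq \mathbb{Z}S_n$. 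Writing $Y_n^{(r)} = \sum_{\mu \vdash r} X_\mu$ for the corresponding element of $\mathbb{Z}S_n$ (with $X_\mu = 0$ when $S_n$ has no elements of reduced cycle type $\mu$), it therefore suffices to prove the matching recursion $Y_n^{(r)} = Y_{n-1}^{(r)} + Y_{n-1}^{(r-1)}\, L_n$; the base cases $r = 0$ (both sides equal $1$) and $n \le 1$ (where $L_1 = 0$, and only $\mu = \varnothing$ contributes) are immediate.

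To establish the recursion for $Y_n^{(r)}$, I would partition $S_n$ according to the value $\sigma(n)$. The permutations with $\sigma(n) = n$ are precisely the elements of $S_{n-1}$, and the inclusion $S_{n-1} \hookrightarrow S_n$ does not change reduced cycle type, so these contribute $Y_{n-1}^{(r)}$. For a permutation with $\sigma(n) \ne n$, put $j = \sigma^{-1}(n)$, so $j < n$, and set $\pi = \sigma \cdot (n, j)$; one checks directly that $\pi \in S_{n-1}$, that $\sigma = \pi \cdot (n, j)$, and that $\sigma \mapsto (\pi, j)$ is a bijection from $\{\sigma \in S_n : \sigma(n) \neq n\}$ onto $S_{n-1} \times \{1, \dots, n-1\}$. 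Since $n$ is a fixed point of $\pi$, left-multiplying $\pi$ by $(n,j)$ merges the singleton cycle $\{n\}$ into the cycle of $\pi$ containing $j$, increasing the length of exactly one cycle by one; hence the reduced cycle type of $\sigma$ has size exactly one more than that of $\pi$. Summing $\pi \cdot (n,j)$ over all $\pi$ of reduced cycle type of size $r - 1$ and all $j \in \{1, \dots, n-1\}$ yields exactly $Y_{n-1}^{(r-1)}\, L_n$, completing the recursion.

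The only genuine bookkeeping is in the second paragraph: verifying that $\sigma \mapsto (\pi, j)$ is a bijection (the inverse sends $(\pi, j)$ to $\pi \cdot (n,j)$, which indeed does not fix $n$ since $\pi(j) < n$), and confirming that multiplication by $(n,j)$ changes the reduced cycle type by a single box --- which holds because $n$ being fixed by $\pi$ forces $(n,j)$ to act by merging $\{n\}$ with one existing cycle rather than splitting or joining two larger cycles. Granting this, matching the two recursions and inducting on $n$ (with $r$ free) finishes the proof. This is essentially Jucys's original argument \cite{Jucys}; an alternative is to expand the generating function $\prod_{i=1}^n (1 + t L_i)$ directly, but it comes down to the same inductive step.
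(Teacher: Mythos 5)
Your argument is correct. Note that the paper does not prove this statement at all: it is quoted as a known result of Jucys, with a citation in place of a proof, so there is no internal argument to compare against. What you give is a complete and self-contained version of the standard inductive proof (equivalent to expanding $\prod_i(t+L_i)=\sum_{\sigma}t^{c(\sigma)}\sigma$, where $c(\sigma)$ is the number of cycles): the splitting $e_r(L_1,\dots,L_n)=e_r(L_1,\dots,L_{n-1})+e_{r-1}(L_1,\dots,L_{n-1})L_n$ is valid as written (and does not even require commutativity, since you keep $L_n$ on the right), the map $\sigma\mapsto(\pi,j)$ with $j=\sigma^{-1}(n)$, $\pi=\sigma\cdot(n,j)$ is indeed a bijection from $\{\sigma:\sigma(n)\neq n\}$ to $S_{n-1}\times\{1,\dots,n-1\}$, and since $n$ is a fixed point of $\pi$, the product $\pi\cdot(n,j)$ absorbs $n$ into the cycle of $\pi$ containing $j$, so the size of the reduced cycle type goes up by exactly one; summing over $\pi$ with $|\mathrm{rct}(\pi)|=r-1$ and $j<n$ gives $Y_{n-1}^{(r-1)}L_n$, which matches your recursion and closes the induction. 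One wording slip: you say ``left-multiplying $\pi$ by $(n,j)$'' when your construction is $\sigma=\pi\cdot(n,j)$, i.e.\ right multiplication; the cycle-merging conclusion holds either way because $n$ is fixed by $\pi$, so this does not affect the proof.
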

\noindent
The dependence on $n$ in Theorem \ref{thm:elem_JM} appears on the left hand side by the number of JM elements appearing, and on the right in the ambient group algebra $\mathbb{Z}S_n$. To instead make a statement independent of $n$, we make the following definition.
\begin{definition}
Let $\Psi: \mathcal{R} \otimes_{\mathbb{Z}} \Lambda \to \mathrm{FH}$ be the $\mathcal{R}$-algebra homomorphism defined by
\[
\Psi(e_r) = \sum_{\mu \vdash r} K_\mu.
\]
\end{definition}
\noindent
As $\Lambda$ is freely generated as a polynomial algebra (over $\mathbb{Z}$) by the elementary symmetric functions, there are no relations to be checked to guarantee that $\Psi$ is a homomorphism.
\begin{theorem}[Theorem 3.8 \cite{Ryba}] \label{thm:FH_lambda_iso}
The map $\Psi$ is an isomorphism from $\mathcal{R} \otimes_{\mathbb{Z}} \Lambda$ to $\mathrm{FH}$.
\end{theorem}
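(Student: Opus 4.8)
The plan is to pass to associated graded algebras. Filter $\mathrm{FH}$ by $F_d = \sum_{|\mu| \le d} \mathcal{R} K_\mu$, write $\bar{K}_\mu$ for the image of $K_\mu$ in $\mathrm{gr}_{|\mu|}\,\mathrm{FH}$, and give $\mathcal{R} \otimes_{\mathbb{Z}} \Lambda$ its usual grading, so that $\deg e_r = r$. Beyond Theorem \ref{FH_symmetric_group_polynomials}, Farahat and Higman also show that $\phi_{\mu,\nu}^\lambda = 0$ unless $|\lambda| \le |\mu| + |\nu|$ and that $\phi_{\mu,\nu}^\lambda$ is a non-negative integer constant when $|\lambda| = |\mu| + |\nu|$ \cite{FarahatHigman}; granting this, $F_\bullet$ is an algebra filtration, and since $\Psi(e_r) = \sum_{\mu \vdash r} K_\mu \in F_r$ and the $e_\lambda$ span $\Lambda$, the map $\Psi$ is filtered. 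Now $\mathrm{gr}\,\mathrm{FH}$ and $\mathrm{gr}(\mathcal{R}\otimes\Lambda) = \mathcal{R}\otimes\Lambda$ are graded free $\mathcal{R}$-modules whose degree-$d$ part has rank equal to the number of partitions of $d$. Hence $\Psi$ is an isomorphism iff $\mathrm{gr}\,\Psi$ is, and (as a surjection between free $\mathcal{R}$-modules of equal finite rank is an isomorphism) it is enough to check that $\mathrm{gr}\,\Psi$ is surjective, i.e. that $\mathrm{gr}\,\mathrm{FH}$ is generated as an $\mathcal{R}$-algebra by the classes $\overline{\Psi(e_r)} = \bar{e}_r := \sum_{\mu \vdash r} \bar{K}_\mu$.

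The content is therefore to show that every $\bar{K}_\mu$ is an integral polynomial in the $\bar{e}_r$; equivalently, that for each $d$ the integer matrix $C_d = (c_{\lambda\sigma})_{\lambda,\sigma \vdash d}$ determined by $\prod_i \bar{e}_{\lambda_i} = \sum_{\sigma \vdash d} c_{\lambda\sigma} \bar{K}_\sigma$ is unimodular. Its entries are the top (``genus zero'') connection coefficients of the symmetric group, which count minimal factorisations, and this is where the real input is needed.

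Over $\mathbb{Q}$ this is easy. The generating function $\bar{E}(z) := \sum_d \bar{e}_d z^d = \sum_\mu \bar{K}_\mu z^{|\mu|}$ in $(\mathrm{gr}\,\mathrm{FH})[[z]]$ satisfies $\bar{E}'(z)/\bar{E}(z) = \sum_{r \ge 1} (-1)^{r-1} \bar{K}_{(r)} z^{r-1}$: this follows by applying $\mathrm{gr}$ to the generating-function form $\prod_{i=1}^n (1 + L_i z) = \sum_\mu X_\mu z^{|\mu|}$ of Jucys's identity (Theorem \ref{thm:elem_JM}) and taking logarithmic derivatives, using that the top-degree part of $p_r(L_1, \ldots, L_n) = \sum_j L_j^r$ is supported on $(r+1)$-cycles and equals $\bar{K}_{(r)}$. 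Thus the single-row classes $\bar{K}_{(r)}$ are the power sums attached to the $\bar{e}_r$ and satisfy Newton's identities; since $\Lambda \otimes \mathbb{Q} = \mathbb{Q}[p_1, p_2, \ldots]$ and both $\mathrm{gr}\,\mathrm{FH}$ and $\Lambda$ have graded rank $p(d)$ in degree $d$, this identifies $\mathrm{gr}\,\mathrm{FH} \otimes \mathbb{Q}$ with $\Lambda \otimes \mathbb{Q}$ compatibly with $\bar{e}_r \leftrightarrow e_r$, and in particular $\det C_d \ne 0$.

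The main obstacle is the integral statement $\det C_d = \pm 1$: one must show that the $\mathbb{Z}$-lattice spanned by the $\bar{K}_\mu$ inside $\Lambda \otimes \mathbb{Q}$ is exactly the standard lattice $\mathbb{Z}[e_1, e_2, \ldots]$. The single-row classes cause no trouble, since Newton's identities are integral; the difficulty is the multi-row $\bar{K}_\mu$, for which I would induct on $|\mu|$, combining Farahat and Higman's explicit formula for the top connection coefficients \cite{FarahatHigman} with the relations $\bar{e}_d = \sum_{\nu \vdash d} \bar{K}_\nu$ to strip off each $\bar{K}_\mu$ integrally (equivalently, to write down an integral inverse of $C_d$). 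An alternative would exploit that $\Phi_n \circ \Psi$ is evaluation of symmetric functions at Jucys-Murphy elements and that $Z(\mathbb{Z}S_n)$ is generated over $\mathbb{Z}$ by the $e_r(L_1, \ldots, L_n)$, so that $\Phi_n$ maps the image of $\Psi$ onto $Z(\mathbb{Z}S_n)$ for every $n$; but deducing surjectivity of $\Psi$ itself from this (rather than of a completion) runs into the same integrality point. I expect essentially all the work of the proof to be concentrated in this step.
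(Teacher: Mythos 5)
Your overall strategy is the right one, and it is the same one the paper uses for the $q$-analogue (and that \cite{Ryba} uses for this statement): filter $\mathrm{FH}$ by $|\mu|$, note $\Psi$ is filtered, and reduce to showing that the associated graded map is given in each degree $d$ by a matrix over $\mathbb{Z}$ (or $\mathcal{R}$) that is invertible over $\mathbb{Z}$. Your reduction steps are sound: the filtration is multiplicative by the top-degree properties of the $\phi_{\mu,\nu}^\lambda$, a surjection between free $\mathcal{R}$-modules of equal finite rank is injective, and your identification of the single-row classes $\bar{K}_{(r)}$ with power sums via the top-degree part of $p_r(L_1,\ldots,L_n)$ is correct and gives $\mathrm{gr}\,\Psi\otimes\mathbb{Q}$ an isomorphism.

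However, the proof has a genuine gap exactly where you say you expect the work to be: the integral statement. Newton's identities only express the $e_r$ in terms of the $p_r$ with denominators, so the rational argument cannot certify that $\det C_d = \pm 1$, i.e.\ that every multi-row $\bar{K}_\mu$ is an \emph{integral} polynomial in the $\bar{e}_r$; and this unimodularity is precisely the nontrivial content of the theorem. You outline two possible routes (an induction on $|\mu|$ using Farahat--Higman's formula for top connection coefficients, or surjectivity onto each $Z(\mathbb{Z}S_n)$), but neither is carried out, and as you note the second runs into the same integrality issue. In the paper this decisive input is not re-derived either: the classical statement is quoted from Theorem 3.8 of \cite{Ryba}, and in the $q$-deformed setting the analogous unimodularity is supplied by Theorem 7.1 of \cite{FrancisGraham} (invertibility over $\mathbb{Z}[q,q^{-1}]$ of the matrix expressing monomial symmetric functions in JM elements against the Geck--Rouquier basis in top degree), together with the stability result from \cite{Mathas_Stability}. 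So to complete your proof you must either prove the unimodularity of $C_d$ directly (e.g.\ by exhibiting a triangular or otherwise explicitly invertible change of basis in $\mathrm{gr}\,\mathrm{FH}$) or invoke such a result; as written, the argument establishes the isomorphism only after tensoring with $\mathbb{Q}$.
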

\noindent
Since the definition of $\Psi$ ``interpolates'' evaluation of symmetric functions at JM elements, $\FH$ can be thought of as being ``the ring of symmetric functions evaluated at JM elements''.

\begin{comment}
This setup grants us the following commutative diagram:
\begin{equation}
\begin{tikzcd}
\mathcal{R} \otimes_{\mathbb{Z}} \Lambda    \arrow[r, "\Psi"]    \arrow[dr, "ev_n"]    &    \FH    \arrow[d, "\Phi_n"]\\
&    Z(\mathbb{Z}S_n)
\end{tikzcd}
\end{equation}
\noindent
where $ev_n: \mathcal{R} \otimes_{\mathbb{Z}} \Lambda \to Z(\mathbb{Z}S_n)$ evaluates an integer-valued polynomial at $n$ and evaluates a symmetric function at $L_1, \ldots, L_n$. To see that the diagram commutes, it suffices to check the generating elements $e_r$ of $\Lambda$, which is the statement of Theorem \ref{thm:elem_JM}.
\end{comment}

\section{The Iwahori-Hecke Algebra} \label{section:Hecke_alg}
\noindent
We aim to prove an Iwahori-Hecke-algebra version of Theorem \ref{thm:FH_lambda_iso}, but this requires some preparation. For example, the correct analogue of conjugacy-class sums in the symmetric group algebra is the Geck-Rouquier basis (of the centre of the Iwahori-Hecke algebra). There is a notion of Jucys-Murphy element, and symmetric polynomials in them yield central elements. However, it was not until relatively recently \cite{FrancisGraham} that symmetric polynomials in the JM elements were proved to span the centre of the Iwahori-Hecke algebra (in the case of the symmetric group, it was show in in \cite{Murphy}). For general background on Iwahori-Hecke algebras we direct the reader to \cite{Mathas}.

\begin{definition}
The \emph{type $A$ Iwahori-Hecke algebra}, $H_n(q)$, is generated over $\mathbb{Z}[q,q^{-1}]$ by $T_i$ ($1 \leq i \leq n-1$), where $(T_i - q)(T_i+1) = 0$, and the $T_i$ satisfy the braid relations: $T_i T_{i+1} T_i = T_{i+1} T_i T_{i+1}$ ($1 \leq i \leq n-2$) and $T_i T_j = T_j T_i$ (when $|i-j| \geq 2$).
\end{definition}
\noindent
Specialising $q$ to $1$, we obtain the Coxeter presentation of $\mathbb{Z}S_n$, the integral group algebra of the symmetric group $S_n$, where $T_i$ is identified with the adjacent transposition $s_i = (i,i+1)$. Thus the Iwahori-Hecke algebra may be thought of as a $q$-deformation of the symmetric group algebra. There are other conventions for the definition of the Iwahori-Hecke algebra, but this one is most convenient for our purposes.
\newline \newline \noindent
It is well known that $H_n(q)$ has a $\mathbb{Z}[q,q^{-1}]$-basis $T_w$ indexed by $w \in S_n$ defined as follows. A reduced expression for $w$ is an expression of the form $w = s_{i_1}s_{i_2} \cdots s_{i_l}$ where each $s_{i_k}$ is an adjacent transposition $(i_k, i_k+1)$, with $l$ as small as possible (the \emph{length} of $w$). Then we define $T_w = T_{i_1} T_{i_2} \cdots T_{i_l}$. Matsumoto's theorem asserts that any two reduced expressions for $w \in S_n$ may be obtained from each other by applying the braid relations appropriately. Since the same relations hold for the Iwahori-Hecke algebra generators $T_i$, applying the braid relations in the same way shows that the corresponding expressions for $T_w$ also coincide.

\subsection{Geck-Rouquier Basis}
To adapt the concept of the Farahat-Higman algebra to the setting of Iwahori-Hecke algebras, we need distinguished elements of the centre $Z(H_n(q))$ to play the role of conjugacy-class sums in $Z(\mathbb{Z}S_n)$. The correct elements are the Geck-Rouquier basis $\Gamma_\mu$ of $Z(H_n(q))$, which are a $q$-deformation of the conjugacy-class sums, in the sense that specialising $q$ to $1$ turns $\Gamma_\mu$ to the conjugacy-class sum $X_\mu$. (Note that we use reduced cycle types to index central elements, rather than ordinary cycle types.) 

\begin{theorem}[Geck-Rouquier, Section 5 \cite{GeckRouquier}]
The centre $Z(H_n(q))$ is a free $\mathbb{Z}[q,q^{-1}]$-module with a basis $\Gamma_\mu$ (indexed by partitions $\mu$ obeying $|\mu| + l(\mu) \leq n$) uniquely defined by the following properties:
\begin{enumerate}
\item $\Gamma_\mu$ is a central element of $H_n(q)$,
\item specialising $q$ to $1$ (and identifying $H_n(1)$ with $\mathbb{Z}S_n$) turns $\Gamma_\mu$ into the conjugacy class sum $X_\mu$,
\item let $\Gamma_\mu = \sum_w c_{\mu, w}(q) T_w$ (where $c_{\mu, w}(q) \in \mathbb{Z}[q,q^{-1}]$), and suppose that $w$ is of minimal length within its conjugacy class. Then $a_{\mu, w}(q) = 1$ if $w$ is of reduced cycle type $\mu$, and $a_{\mu,w}(q) = 0$ if $w$ is not of reduced cycle type $\mu$.
\end{enumerate}
\end{theorem}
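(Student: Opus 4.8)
The plan is to exploit the fact that $H_n(q)$ is a symmetric $\mathbb{Z}[q,q^{-1}]$-algebra: the linear form $\tau$ determined by $\tau(T_w) = \delta_{w,1}$ is a trace, and with respect to the bilinear form $\langle x, y\rangle := \tau(xy)$ the basis $\{T_w\}_{w \in S_n}$ has dual basis $\{q^{-\ell(w)} T_{w^{-1}}\}_{w\in S_n}$; equivalently $\langle T_w, T_u\rangle = \delta_{u, w^{-1}}\, q^{\ell(w)}$, so the form is nondegenerate over $\mathbb{Z}[q,q^{-1}]$. A standard consequence of symmetry is that $x \mapsto \tau(x \cdot -)$ restricts to an isomorphism of $\mathbb{Z}[q,q^{-1}]$-modules
\[
Z(H_n(q)) \;\xrightarrow{\ \sim\ }\; \big(H_n(q)/[H_n(q),H_n(q)]\big)^{*}
\]
between the centre and the dual of the cocenter $\overline{H}_n := H_n(q)/[H_n(q),H_n(q)]$. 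So I would reduce the theorem to a description of $\overline{H}_n$.

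The key input — which I view as the main obstacle, and which I would import from the Geck--Rouquier/Geck--Pfeiffer analysis of minimal-length elements in conjugacy classes of $S_n$ — is the following. Since reduced cycle type gives a bijection between conjugacy classes of $S_n$ and partitions $\mu$ with $|\mu| + \ell(\mu) \le n$, fix for each such $\mu$ an element $w_\mu$ of minimal length in the corresponding class $C_\mu$. Then: (i) any two minimal-length elements of $C_\mu$ are connected by a chain of length-preserving conjugations $w \mapsto s w s$ ($s$ a simple reflection), which forces $T_w \equiv T_{w'} \pmod{[H_n(q),H_n(q)]}$ whenever $w,w'$ have minimal length in the same class; and (ii) the images $[T_{w_\mu}]$ form a free $\mathbb{Z}[q,q^{-1}]$-basis of $\overline{H}_n$. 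Granting (i)--(ii), $Z(H_n(q))$ is free of the claimed rank, and I \emph{define} $\Gamma_\mu$ to be the central element corresponding under the isomorphism above to the functional dual to the basis $\{[T_{w_\nu}]\}_\nu$ of $\overline{H}_n$; concretely $\tau(\Gamma_\mu T_{w_\nu}) = q^{\ell(w_\nu)}\delta_{\mu\nu}$ for all $\nu$.

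It then remains to verify (1)--(3) and uniqueness. Property (1) holds by construction. For (3), write $\Gamma_\mu = \sum_w c_{\mu,w}(q) T_w$; the formula for $\langle\,,\,\rangle$ gives $\tau(\Gamma_\mu T_w) = q^{\ell(w)} c_{\mu, w^{-1}}(q)$ for every $w \in S_n$, so if $w$ has minimal length in its class $C_\nu$ then, using (i), $q^{\ell(w)} c_{\mu, w^{-1}}(q) = \tau(\Gamma_\mu T_{w_\nu}) = q^{\ell(w_\nu)}\delta_{\mu\nu}$, i.e. $c_{\mu, w^{-1}}(q) = \delta_{\mu\nu}$; as $w \mapsto w^{-1}$ preserves length and cycle type, this is exactly property (3). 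For (2), all the ingredients ($\tau$, the basis $\{[T_{w_\nu}]\}$, and $\Gamma_\mu$) are defined over $\mathbb{Z}[q,q^{-1}]$ and compatible with the specialisation $q \mapsto 1$; specialising, $\overline{\Gamma_\mu} \in Z(\mathbb{Z}S_n)$ is the element whose $\langle\,,\,\rangle$-pairing with every class representative $w_\nu$ equals $\delta_{\mu\nu}$, and the one-line computation $\langle X_\mu, w_\nu\rangle = \#\{g \in C_\mu : g = w_\nu^{-1}\} = \delta_{\mu\nu}$ identifies this element as the class sum $X_\mu$. Finally, for uniqueness it suffices (using (1) and (3)) to show that a central $z = \sum_w c_w(q) T_w$ with $c_w(q) = 0$ for every $w$ of minimal length in its class is zero: then $\tau(z T_w) = q^{\ell(w)} c_{w^{-1}}(q) = 0$ for all minimal-length $w$, so $\tau(z \cdot -)$ kills the basis $\{[T_{w_\nu}]\}$ of $\overline{H}_n$ and hence vanishes, whence $z = 0$ by nondegeneracy; applying this to the difference of two candidates gives uniqueness.

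I expect the genuinely difficult part to be (i)--(ii) — especially the \emph{integral} assertion that $\overline{H}_n$ is free over $\mathbb{Z}[q,q^{-1}]$ on the classes of minimal-length representatives. One proves this by induction on length: an arbitrary $T_w$ is rewritten modulo $[H_n(q),H_n(q)]$ using the quadratic relation $T_s^2 = (q-1)T_s + q$ together with $T_sT_v \equiv T_v T_s$, reducing either to shorter elements or to other minimal-length elements of the same class; the delicate point is that this rewriting introduces no denominators, so that one genuinely obtains a $\mathbb{Z}[q,q^{-1}]$-basis rather than only a $\mathbb{Q}(q)$-basis. For $S_n$ the combinatorics is explicit (one may take $w_\mu$ to be a product of ``increasing'' cycles on consecutive intervals), which makes the induction manageable, but it is the heart of the matter.
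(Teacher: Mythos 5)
This statement is imported by the paper from Geck--Rouquier \cite{GeckRouquier} without proof, so there is no internal argument to compare against; judged on its own terms, your proposal reconstructs what is essentially the standard (indeed, essentially the original) proof. The skeleton is sound: $\tau(T_w)=\delta_{w,1}$ is a symmetrizing trace with $\tau(T_wT_u)=q^{\ell(w)}\delta_{u,w^{-1}}$, so $z\mapsto\tau(z\cdot-)$ identifies $Z(H_n(q))$ with the dual of the cocenter; given that the images of minimal-length class representatives $T_{w_\mu}$ form a free $\mathbb{Z}[q,q^{-1}]$-basis of the cocenter, your normalized dual basis $\Gamma_\mu$ exists, is a basis of the centre, and your verifications of (1), (3), uniqueness, and the $q=1$ specialisation are correct deductions (the key point you use implicitly and correctly is that $\tau(z[a,b])=0$ for central $z$).

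The caveats are exactly where you place them, but they deserve to be stated sharply. First, everything rests on your imported inputs (i)--(ii), which are the Geck--Pfeiffer theory of minimal-length elements in conjugacy classes and the integral freeness of the cocenter; your rewriting sketch only addresses that the $[T_{w_\mu}]$ \emph{span} the cocenter over $\mathbb{Z}[q,q^{-1}]$, and linear independence still needs an argument (the cheapest: specialise $q=1$, where the images of class representatives are visibly a basis of the cocenter of $\mathbb{Z}S_n$). Second, (i) as you phrase it --- any two minimal-length elements linked by length-preserving conjugation by \emph{simple} reflections --- is stronger than the general Geck--Pfeiffer statement, which only gives ``strong conjugation'' by elements $x$ with length additivity; the weaker statement already yields $T_{w}\equiv T_{w'}$ modulo $[H_n(q),H_n(q)]$, which is all your argument uses, so you should quote that version. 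Third, in step (2) the computation $\langle X_\mu, w_\nu\rangle=\delta_{\mu\nu}$ identifies the specialisation of $\Gamma_\mu$ with $X_\mu$ only once you add the (one-line) remark that a central element of $\mathbb{Z}S_n$ is determined by its pairings against the $w_\nu$, since the class sums form a basis pairing with them by the identity matrix. With those points made explicit, the proof is complete modulo the cited Geck--Pfeiffer results, which is a reasonable level of reliance given that the paper itself cites the full theorem.
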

\noindent
No formula for expressing $\Gamma_\mu$ in the $T_w$ basis of $H_n(q)$ is known.

\begin{example}
In $H_3(q)$, there are three applicable reduced cycle types: $2$, $1$, $\varnothing$. The corresponding Geck-Rouquier basis elements are
\begin{eqnarray*}
\Gamma_\varnothing &=& T_{\Id}, \\
\Gamma_{(1)} &=& T_{(12)} + T_{(23)} + q^{-1}T_{(13)}, \\
\Gamma_{(2)} &=& T_{(123)} + T_{(132)} + (q-1)q^{-1}T_{(13)}.
\end{eqnarray*}
We point out that $(13)$ has length 3, which is not minimal among elements of reduced cycle type $(1)$, i.e. transpositions. Hence the coefficient of $T_{(13)}$ is allowed be different from $0$ or $1$. Note also that $q^{-1}$ appears in these elements, so it is essential that we work over $\mathbb{Z}[q,q^{-1}]$ rather than $\mathbb{Z}[q]$.
\end{example}

\noindent
Adapting ideas of Ivanov and Kerov \cite{IvanovKerov}, M\'{e}liot proved the analogous version of Theorem \ref{FH_symmetric_group_polynomials} for the Geck-Rouquier basis, proving Conjecture 3.1 of \cite{FrancisWang}. Let $\mathcal{R}[q,q^{-1}] = \mathbb{Z}[q,q^{-1}] \otimes_{\mathbb{Z}} \mathcal{R}$.

\begin{theorem}[M\'{e}liot, Theorem 1 \cite{Meliot_Hecke}] \label{thm:FH_hecke_polys}
For any partitions $\lambda, \mu, \nu$, there exists a unique $\phi_{\mu, \nu}^\lambda(q,t) \in \mathcal{R}[q,q^{-1}]$ such that the equation
\[
\Gamma_\mu \Gamma_\nu = \sum_{\lambda} \phi_{\mu, \nu}^\lambda(q,n) \Gamma_\lambda
\]
holds in $Z(H_n(q))$ for all $n \in \mathbb{Z}_{\geq 0}$. (We take $\Gamma_\lambda$ to be zero if $|\lambda| + l(\lambda) > n$.)
\end{theorem}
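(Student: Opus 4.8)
\noindent
The plan is to $q$-deform the partial-permutation argument of Ivanov--Kerov \cite{IvanovKerov} for symmetric groups; this is essentially the route of \cite{Meliot_Hecke}. Uniqueness and the integrality $\phi^\lambda_{\mu,\nu} \in \mathcal{R}[q,q^{-1}]$ are formal once existence is known. For $n$ large (in terms of $|\mu|+|\nu|$), every partition $\lambda$ that can occur satisfies $|\lambda| + l(\lambda) \leq n$, so the $\Gamma_\lambda$ index a basis of the free $\mathbb{Z}[q,q^{-1}]$-module $Z(H_n(q))$ and the structure constants $c^\lambda_{\mu,\nu}(q,n) \in \mathbb{Z}[q,q^{-1}]$ are well-defined for all large $n$. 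A polynomial over $\mathbb{Q}(q)$ is determined by its values at infinitely many integers, which gives uniqueness of the interpolating polynomial; and if such a polynomial takes values in $\mathbb{Z}[q,q^{-1}]$ at every integer, then expanding it in the $\mathbb{Q}(q)$-basis $\bigl\{\binom{t}{r}\bigr\}_{r\geq 0}$ of $\mathbb{Q}(q)[t]$ and taking successive finite differences at $t = 0,1,2,\dots$ shows that its coefficients lie in $\mathbb{Z}[q,q^{-1}]$, i.e. that it lies in $\mathcal{R}[q,q^{-1}]$. So the task is to interpolate $n \mapsto c^\lambda_{\mu,\nu}(q,n)$ by a polynomial.

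First, a boundedness statement: for all $n$, $c^\lambda_{\mu,\nu}(q,n) = 0$ unless $|\lambda| \leq |\mu| + |\nu|$, so that only finitely many $\lambda$ ever occur. This reflects the compatibility of the Geck--Rouquier basis with the filtration of $Z(H_n(q))$ by size of reduced cycle type (at $q = 1$ it is the classical statement that a product of permutations of reduced cycle types $\mu$ and $\nu$ has reduced cycle type of size at most $|\mu|+|\nu|$). The real content is polynomiality, and for this I would build a single ``$q$-deformed partial-permutation algebra'' $\mathscr{B}$: its underlying module has a basis of pairs $(A,w)$ with $A \subset \mathbb{Z}_{>0}$ finite and $\mathrm{supp}(w) \subseteq A$, the multiplication is modelled on that of the inductive limit $H_\infty(q)$ of $H_1(q) \subseteq H_2(q) \subseteq \cdots$ and records the union of the supporting sets, and for each $n$ there is a surjective homomorphism $\pi_n \colon \mathscr{B} \to H_n(q)$ sending $(A,w) \mapsto T_w$ if $A \subseteq \{1,\dots,n\}$ and $(A,w) \mapsto 0$ otherwise.

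Using that the $T_w$-coefficients of $\Gamma_\mu$ stabilise with $n$ and depend only on $w$ up to order-preserving relabelling, one lifts $\Gamma_\mu$ to an element $\widehat{\Gamma}_\mu$ of a suitable completion of $\mathscr{B}$ with $\pi_n(\widehat{\Gamma}_\mu) = \Gamma_\mu$, and, for each $k \geq 0$, to a ``padded'' variant $\widehat{\Gamma}^{(k)}_\mu$ in which each basis term $(A,w)$ is replaced by the sum of all $(A',w)$ with $A \subseteq A'$ and $|A'| = |A| + k$; these can be arranged so that, for a minimal-length element $w_\lambda$ of reduced cycle type $\lambda$ (whose support has size exactly $|\lambda| + l(\lambda)$), the $T_{w_\lambda}$-coefficient of $\pi_n(\widehat{\Gamma}^{(k)}_{\lambda'})$ equals $\delta_{\lambda\lambda'}\binom{n - |\lambda| - l(\lambda)}{k}$. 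The crux is then a closure relation with coefficients free of $n$,
\[
\widehat{\Gamma}_\mu \widehat{\Gamma}_\nu \;=\; \sum_{\lambda,k} \widetilde{\phi}^{\lambda,k}_{\mu,\nu}(q)\, \widehat{\Gamma}^{(k)}_\lambda, \qquad \widetilde{\phi}^{\lambda,k}_{\mu,\nu}(q) \in \mathbb{Z}[q,q^{-1}],
\]
with only finitely many nonzero terms (bounded via the boundedness step). Applying $\pi_n$, and using both $\pi_n(\widehat{\Gamma}_\mu \widehat{\Gamma}_\nu) = \Gamma_\mu \Gamma_\nu$ and the fact that property~(3) of the Geck--Rouquier basis makes $c^\lambda_{\mu,\nu}(q,n)$ the $T_{w_\lambda}$-coefficient of $\Gamma_\mu \Gamma_\nu$, one reads off
\[
c^\lambda_{\mu,\nu}(q,n) \;=\; \sum_k \widetilde{\phi}^{\lambda,k}_{\mu,\nu}(q)\, \binom{n - |\lambda| - l(\lambda)}{k} \;\in\; \mathcal{R}[q,q^{-1}],
\]
which is the sought polynomial $\phi^\lambda_{\mu,\nu}(q,t)$.

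The main obstacle --- and the reason this is genuinely harder than the symmetric-group case --- is twofold. First, $\Gamma_\mu$ is not supported only on minimal-length elements: it carries correction terms $c_{\mu,w}(q)T_w$ with $c_{\mu,w}(1) = 0$ on longer $w$, which may even have support strictly smaller than a minimal-length representative of their class; so ``the Geck--Rouquier element of $H_A(q)$'' does not localise the way a conjugacy-class sum does, and for a non-interval set $A$ the span of $\{T_w : w \in S_A\}$ is not even a subalgebra of $H_\infty(q)$ (already $T_{(13)}^2 \neq (q-1)T_{(13)} + q$), so the naive ``multiply inside $H_A(q)$'' recipe is unavailable. Second, the $q$-deformed partial-permutation multiplication depends on the length function, and so --- unlike the symmetric-group product --- is not invariant under relabellings; hence the closure relation above, which is immediate from $S_\infty$-conjugation-equivariance when $q = 1$, requires a genuine argument. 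Setting up $\mathscr{B}$, the lifts $\widehat{\Gamma}_\mu$ and $\widehat{\Gamma}^{(k)}_\mu$, and the closure relation carefully --- for instance by conjugating supporting sets to intervals while tracking how the $T$-basis transforms --- is exactly the technical heart of \cite{Meliot_Hecke}, and is where I would concentrate the work. At every stage, specialising $q \to 1$ recovers the Farahat--Higman/Ivanov--Kerov picture as a consistency check; and over $\mathbb{Q}(q)$, where $H_n(q)$ is split semisimple, one can cross-check by expressing $c^\lambda_{\mu,\nu}(q,n)$ through the scalars by which $\Gamma_\mu$ and $\Gamma_\nu$ act on the Specht modules $S^\rho$, $\rho \vdash n$.
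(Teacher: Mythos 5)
This statement is quoted from M\'{e}liot and the paper offers no proof of it (the Remark in Section 2 merely records that M\'{e}liot's argument adapts the Ivanov--Kerov partial-permutation method), so the relevant question is whether your proposal would stand as a proof on its own. It does not: it is an accurate outline of the strategy of \cite{Meliot_Hecke}, but the step that carries all of the content is asserted rather than established. Concretely, the existence of the lifts $\widehat{\Gamma}_\mu$ and of the padded variants $\widehat{\Gamma}^{(k)}_\mu$ in the completed $q$-partial-permutation algebra, and above all the closure relation $\widehat{\Gamma}_\mu\widehat{\Gamma}_\nu = \sum_{\lambda,k}\widetilde{\phi}^{\lambda,k}_{\mu,\nu}(q)\,\widehat{\Gamma}^{(k)}_\lambda$ with coefficients in $\mathbb{Z}[q,q^{-1}]$ \emph{independent of $n$} and with only finitely many terms, is exactly equivalent to the theorem being proved; deferring it (``this is where I would concentrate the work'') leaves the proof with no mathematical core. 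You correctly diagnose why the $q=1$ argument does not transport --- Geck--Rouquier elements do not localise to subsets, the span of $\{T_w : w\in S_A\}$ for non-interval $A$ is not a subalgebra, and the multiplication in the $q$-deformed partial-permutation algebra is not relabelling-equivariant because it sees the length function --- but the one-line remedy offered (``conjugating supporting sets to intervals while tracking how the $T$-basis transforms'') is precisely the technical problem, not its solution.

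Two further points would also need attention even granted the closure relation. First, the finiteness input you invoke, namely $c^\lambda_{\mu,\nu}(q,n)=0$ unless $|\lambda|\le|\mu|+|\nu|$, is itself a nontrivial theorem (it appears in this paper as Theorem \ref{thm:geck_rouquier_filtration}, due to Francis--Wang); it is not something that ``reflects'' the $q=1$ statement and cannot simply be asserted, though in M\'{e}liot's setup the finiteness of the expansion is extracted from the construction itself rather than quoted. Second, the theorem demands the identity in $Z(H_n(q))$ for \emph{all} $n\ge 0$, not just $n$ large; your interpolation argument (which is fine for uniqueness and for membership in $\mathcal{R}[q,q^{-1}]$, via finite differences) only produces a polynomial matching the structure constants for large $n$, and the validity at small $n$ --- where various $\Gamma_\lambda$ vanish and the coefficient extraction via $T_{w_\lambda}$ is unavailable --- has to come from applying $\pi_n$ to the universal identity and checking that $\pi_n(\widehat{\Gamma}^{(k)}_\lambda)$ is the claimed binomial multiple of $\Gamma_\lambda$ (interpreted as $0$ when $|\lambda|+l(\lambda)>n$) for every $n$; this verification is also missing. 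In short: right roadmap, essentially the one M\'{e}liot follows, but the proposal proves none of the steps that make the theorem true.
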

\begin{example} \label{ex:Hecke_example}
At the end of the paper \cite{Meliot_Hecke}, the following equation is found:
\[
\Gamma_{(1)}^2 = (q + q^{-1}) \Gamma_{(1,1)} + (q+1+q^{-1})\Gamma_{(2)} + (n-1)(q-1)\Gamma_{(1)} + {n \choose 2}q \Gamma_\varnothing.
\]
Setting $q=1$ turns $\Gamma_\lambda$ into $X_\lambda$, and we recover Example \ref{eqn:FH_alg_example}.
\end{example}

\section{\texorpdfstring{The $q$-Farahat-Higman Algebra}{The q-Farahat-Higman Algebra}} \label{sec:FHq}
\noindent 
\subsection{\texorpdfstring{Construction of $\mathrm{FH}_q$}{Construction of FHq}}
As observed in Section 3 of \cite{FrancisWang}, Theorem \ref{thm:FH_hecke_polys} allows us to construct a $q$-deformation of the Farahat-Higman algebra from Definition \ref{FH_symmetric_group_polynomials}.

\begin{definition} \label{def:FH_hecke_alg}
Let $\mathrm{FH}_q$ be the free $\mathcal{R}[q,q^{-1}]$-module with basis $K_{\mu,q}$ indexed by all partitions $\mu$. Define a bilinear multiplication on $\mathrm{FH}_q$ by
\[
K_{\mu,q} K_{\nu,q} = \sum_{\lambda} \phi_{\mu, \nu}^\lambda(q,t) K_{\lambda,q}
\]
where $\phi_{\mu, \nu}^\lambda(q,t) \in \mathcal{R}[q,q^{-1}]$ are as defined by Theorem \ref{thm:FH_hecke_polys}. We call $\mathrm{FH}_q$ the \emph{$q$-Farahat-Higman algebra}.
\end{definition}
\noindent
Similarly to Theorem \ref{thm:FH_spec_hom}, we have an evaluation map allowing us to return to $Z(H_n(q))$ from $\mathrm{FH}_q$.
\begin{proposition} \label{prop:specisalisation_hom}
For each $n \in \mathbb{Z}_{\geq 0}$ there is a surjective $\mathbb{Z}[q,q^{-1}]$-algebra homomorphism $\Phi_{n,q} :\mathrm{FH}_q \to Z(H_n(q))$ defined by
\[
\Phi_{n,q} \left( \sum_{\mu} a_\mu(q,t) K_{\mu,q} \right) = \sum_{\mu} a_\mu(q,n) \Gamma_\mu,
\]
where $a_\mu(q,t) \in \mathcal{R}[q,q^{-1}]$, and $\Gamma_\mu$ is taken to be zero if $|\mu| + l(\mu) > n$.
\end{proposition}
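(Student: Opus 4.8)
The plan is to verify that $\Phi_{n,q}$ is a well-defined $\mathbb{Z}[q,q^{-1}]$-algebra homomorphism, and then that it is surjective. Well-definedness as a $\mathbb{Z}[q,q^{-1}]$-module map is immediate: $\mathrm{FH}_q$ is free over $\mathcal{R}[q,q^{-1}]$ on the $K_{\mu,q}$, so we may assign the images $K_{\mu,q} \mapsto \Gamma_\mu$ arbitrarily and extend $\mathcal{R}[q,q^{-1}]$-linearly, where an element $a(q,t) \in \mathcal{R}[q,q^{-1}]$ acts on $Z(H_n(q))$ by evaluating $t \mapsto n$ (note $a(q,n) \in \mathbb{Z}[q,q^{-1}]$ precisely because $a$ is integer-valued in $t$, so this lands in $Z(H_n(q))$). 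The only subtlety is that for $|\mu| + l(\mu) > n$ we send $K_{\mu,q}$ to $0$; this is consistent since we have declared $\Gamma_\mu = 0$ in that range.

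Next I would check multiplicativity. It suffices to check on basis elements: we must show $\Phi_{n,q}(K_{\mu,q} K_{\nu,q}) = \Phi_{n,q}(K_{\mu,q}) \Phi_{n,q}(K_{\nu,q}) = \Gamma_\mu \Gamma_\nu$. By definition $K_{\mu,q} K_{\nu,q} = \sum_\lambda \phi_{\mu,\nu}^\lambda(q,t) K_{\lambda,q}$, so applying $\Phi_{n,q}$ gives $\sum_\lambda \phi_{\mu,\nu}^\lambda(q,n) \Gamma_\lambda$, which equals $\Gamma_\mu \Gamma_\nu$ in $Z(H_n(q))$ by Theorem \ref{thm:FH_hecke_polys} — this is exactly the content of that theorem, including the convention that $\Gamma_\lambda = 0$ when $|\lambda| + l(\lambda) > n$. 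Since $\Phi_{n,q}$ is additive and $\mathbb{Z}[q,q^{-1}]$-linear, multiplicativity on basis elements extends to all of $\mathrm{FH}_q$ by bilinearity. (One should also note $\Phi_{n,q}(K_{\varnothing,q}) = \Gamma_\varnothing = T_{\mathrm{Id}}$ is the identity of $H_n(q)$, so the map is unital; alternatively, unitality follows formally once we know $\mathrm{FH}_q$ is unital with unit $K_{\varnothing, q}$, which itself can be deduced from surjectivity of the $\Phi_{n,q}$ for all $n$.)

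Finally, surjectivity: the Geck-Rouquier elements $\Gamma_\mu$ with $|\mu| + l(\mu) \leq n$ form a $\mathbb{Z}[q,q^{-1}]$-basis of $Z(H_n(q))$, and each such $\Gamma_\mu$ is visibly in the image, being $\Phi_{n,q}(K_{\mu,q})$. Hence $\Phi_{n,q}$ is surjective.

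I expect there to be no serious obstacle here: the statement is essentially a formal consequence of Theorem \ref{thm:FH_hecke_polys} together with the definition of $\mathrm{FH}_q$, mirroring the passage from Theorem \ref{FH_symmetric_group_polynomials} to Theorem \ref{thm:FH_spec_hom} in the symmetric group case. The one point requiring a little care — and the closest thing to a "main obstacle" — is bookkeeping around the truncation: making sure that the identity $\Gamma_\mu \Gamma_\nu = \sum_\lambda \phi_{\mu,\nu}^\lambda(q,n)\Gamma_\lambda$ remains correct after discarding the terms with $|\lambda| + l(\lambda) > n$, which is guaranteed because Theorem \ref{thm:FH_hecke_polys} is stated with precisely that convention. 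No genuine relations need to be checked beyond this, since $\mathrm{FH}_q$ is defined by its structure constants rather than by generators and relations.
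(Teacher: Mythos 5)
Your proof is correct and follows essentially the same route as the paper's: surjectivity because the Geck-Rouquier elements span $Z(H_n(q))$ and each is the image of some $K_{\mu,q}$, and multiplicativity as an immediate consequence of Theorem \ref{thm:FH_hecke_polys}, with your remarks on well-definedness, the truncation convention, and unitality simply spelling out details the paper leaves implicit.
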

\begin{proof}
Note that $\Phi_{n,q}(K_{\mu,q}) = \Gamma_{\mu}$, and the Geck-Rouquier basis spans $Z(H_n(q))$. Therefore $\Phi_{n,q}$ is surjective. It is a homomorphism by Theorem \ref{thm:FH_hecke_polys}. 
\end{proof}
\noindent
From here it is routine to confirm that $\mathrm{FH}_q$ is commutative, associative, and unital. These properties can be phrased in terms of equations in $\mathcal{R}[q,q^{-1}]$ involving the structure constants $\phi_{\mu, \nu}^\lambda(q,t)$. For example, commutativity is the assertion that $K_{\mu, q}K_{\nu,q} - K_{\nu,q}K_{\mu,q} = 0$, or that
\[
\sum_{\lambda} \left(\phi_{\mu,\nu}^\lambda(q,t) - \phi_{\nu,\mu}^\lambda(q,t) \right) K_{\lambda, q} = 0.
\]
We can apply $\Phi_{n,q}$ to the left hand side, passing to $Z(H_n(q))$:
\[
\sum_{\lambda} \left(\phi_{\mu,\nu}^\lambda(q,n) - \phi_{\nu,\mu}^\lambda(q,n) \right) \Gamma_{\lambda} = \Phi_{n,q}(K_{\mu, q}K_{\nu,q} - K_{\nu,q}K_{\mu,q}) = \Gamma_\mu \Gamma_\nu - \Gamma_\nu \Gamma_\mu = 0,
\]
where in the last step we used the fact that $Z(H_n(q))$ is commutative. Hence if $n \geq |\lambda| + l(\lambda)$ so that $\Gamma_\lambda$ is nonzero, we conclude that $\phi_{\mu,\nu}^\lambda(q,n) - \phi_{\nu,\mu}^\lambda(q,n) = 0$. In particular, we have $\phi_{\mu,\nu}^\lambda(q,t) - \phi_{\nu,\mu}^\lambda(q,t) = 0$ for $t$ in a Zariski-dense set, and hence in general. Similar arguments apply for associativity and unitality.

\begin{proposition} \label{prop:diagram}
The following diagram commutes:
\begin{equation*}
\begin{tikzcd}
\mathrm{FH}_q     \arrow[r, "\Theta"]    \arrow[d, "\Phi_{n,q}"]    &    \FH    \arrow[d, "\Phi_n"]\\
Z(H_n(q)) \arrow[r,"\theta"]&    Z(\mathbb{Z}S_n)
\end{tikzcd}
\end{equation*}
where $\theta: H_n(q) \to \mathbb{Z}S_n$ is evaluation at $q=1$ and $\Theta$ is the map
\[
\Theta \left( \sum_{\mu} a_\mu(q,t) K_{\mu,q} \right) = \sum_{\mu} a_\mu(1,t) K_\mu
\]
which is a ring homomorphism.
\end{proposition}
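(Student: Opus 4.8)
The plan is to reduce both assertions to a single polynomial identity, namely that M\'eliot's structure polynomials specialise at $q=1$ to the Farahat--Higman polynomials:
\[
\phi_{\mu,\nu}^\lambda(1,t) = \phi_{\mu,\nu}^\lambda(t) \qquad \text{for all partitions } \lambda,\mu,\nu.
\]
Granting this, everything else is bookkeeping. First I would note that $\Theta$ is well-defined: writing $a_\mu(q,t) = \sum_k c_k(q)\, p_k(t)$ with $c_k(q) \in \mathbb{Z}[q,q^{-1}]$ and $p_k(t) \in \mathcal{R}$, the specialisation $a_\mu(1,t) = \sum_k c_k(1)\, p_k(t)$ again lies in $\mathcal{R}$, so $\Theta$ lands in $\mathrm{FH}$; it is plainly additive (and semilinear along the map $\mathcal{R}[q,q^{-1}] \to \mathcal{R}$, $q \mapsto 1$, on scalars). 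Also note $\theta$ is surjective, hence restricts to a map $Z(H_n(q)) \to Z(\mathbb{Z}S_n)$, and $\theta(\Gamma_\mu) = X_\mu$ by the defining property of the Geck--Rouquier basis (with the conventions $\Gamma_\mu = 0 = X_\mu$ whenever $|\mu| + l(\mu) > n$ matching up).

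To prove the polynomial identity, apply $\theta$ to the defining equation $\Gamma_\mu \Gamma_\nu = \sum_\lambda \phi_{\mu,\nu}^\lambda(q,n)\,\Gamma_\lambda$ of Theorem~\ref{thm:FH_hecke_polys} (noting $\phi_{\mu,\nu}^\lambda(q,n) \in \mathbb{Z}[q,q^{-1}]$ for $n \in \mathbb{Z}_{\geq 0}$). Since $\theta$ is a ring homomorphism sending $\Gamma_\lambda$ to $X_\lambda$ and acting as $q \mapsto 1$ on scalars, this gives $X_\mu X_\nu = \sum_\lambda \phi_{\mu,\nu}^\lambda(1,n)\, X_\lambda$ in $Z(\mathbb{Z}S_n)$ for every $n$. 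Comparing with Theorem~\ref{FH_symmetric_group_polynomials} and using that for $n$ sufficiently large the nonzero class sums $X_\lambda$ are linearly independent, we obtain $\phi_{\mu,\nu}^\lambda(1,n) = \phi_{\mu,\nu}^\lambda(n)$ for all large $n$; two polynomials agreeing at infinitely many integers coincide, so $\phi_{\mu,\nu}^\lambda(1,t) = \phi_{\mu,\nu}^\lambda(t)$.

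With this in hand, $\Theta$ is a ring homomorphism: $\Theta(K_{\mu,q}K_{\nu,q}) = \sum_\lambda \phi_{\mu,\nu}^\lambda(1,t)\,K_\lambda = \sum_\lambda \phi_{\mu,\nu}^\lambda(t)\,K_\lambda = K_\mu K_\nu = \Theta(K_{\mu,q})\Theta(K_{\nu,q})$, and the general case follows by expanding a product of two arbitrary elements and using additivity and the scalar-semilinearity noted above. Commutativity of the square is then a direct computation on $x = \sum_\mu a_\mu(q,t)\,K_{\mu,q}$: going right then down gives $\Phi_n(\Theta(x)) = \sum_\mu a_\mu(1,n)\,X_\mu$, while going down then right gives $\theta(\Phi_{n,q}(x)) = \sum_\mu a_\mu(1,n)\,\theta(\Gamma_\mu) = \sum_\mu a_\mu(1,n)\,X_\mu$; the two agree. (Alternatively, one can first check this square commutes as a diagram of abelian groups, which is immediate, and then deduce multiplicativity of $\Theta$ from the fact that $\bigcap_n \ker \Phi_n = 0$ in $\mathrm{FH}$ together with Proposition~\ref{prop:specisalisation_hom} and Theorem~\ref{thm:FH_spec_hom}; this packages the same polynomial-vanishing argument slightly differently.)

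I expect the identity $\phi_{\mu,\nu}^\lambda(1,t) = \phi_{\mu,\nu}^\lambda(t)$ to be the only real content, and even there the argument is short; the one point requiring mild care is to carry out the polynomial comparison at values of $n$ large enough that the relevant conjugacy-class sums $X_\lambda$ actually form part of a basis of $Z(\mathbb{Z}S_n)$, so that the structure constants are uniquely pinned down.
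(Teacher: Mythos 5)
Your proof is correct and follows essentially the same route as the paper: apply $\theta$ (equivalently $\theta\circ\Phi_{n,q}$) to the defining equation of the structure constants to obtain $X_\mu X_\nu = \sum_\lambda \phi_{\mu,\nu}^\lambda(1,n) X_\lambda$, deduce $\phi_{\mu,\nu}^\lambda(1,t)=\phi_{\mu,\nu}^\lambda(t)$ from the uniqueness in Theorem~\ref{FH_symmetric_group_polynomials}, and then both the multiplicativity of $\Theta$ and the commutativity of the square are immediate computations. The only difference is that you spell out the uniqueness step (linear independence of the class sums for large $n$ plus polynomials agreeing at infinitely many integers), which the paper leaves implicit.
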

\begin{proof}
The commutativity of the diagram follows from the fact that both compositions are given by
\[
\sum_{\mu}a_\mu(q,t) K_{\mu, q} \mapsto \sum_{\mu} a_\mu(1,n) X_\mu.
\]
To see that $\Theta$ is a homomorphism, consider the equation (in $Z(H_n(q))$) defining the structure constants $\phi_{\mu, \nu}^\lambda(q,n)$ of $\mathrm{FH}_q$,
\[
K_{\mu,q} K_{\nu,q} = \sum_{\lambda} \phi_{\mu, \nu}^\lambda(q,t) K_{\lambda,q},
\]
and apply $\theta \circ \Phi_{n,q}$ to get
\[
X_{\mu} X_{\nu} = \sum_{\lambda} \phi_{\mu, \nu}^\lambda(1,n) X_{\lambda},
\]
which is the equation (in $Z(\mathbb{Z}S_n)$) that determines the structure constants $\phi_{\mu,\nu}^\lambda(t)$ of $\FH$. We conclude that $ \phi_{\mu, \nu}^\lambda(1,t) =  \phi_{\mu, \nu}^\lambda(t)$, and hence $\Theta$ is a homomorphism.
\end{proof}
\noindent
Proposition \ref{prop:diagram} can be thought of as ``lifting'' the statement that $H_n(q)$ is a $q$-deformation of $\mathbb{Z}S_n$ to the level of the Farahat-Higman algebra.
\newline \newline \noindent
We will need some multiplicative properties of the Geck-Rouquier basis.

\begin{theorem}[Francis-Wang, Theorem 1.1 \cite{FrancisWang}] \label{thm:geck_rouquier_filtration}
The structure constant $\phi_{\mu, \nu}^\lambda(q,t)$ satisfies the following properties:
\begin{enumerate}
\item $\phi_{\mu, \nu}^\lambda(q,t) = 0$ unless $|\mu| + |\nu| \leq |\lambda|$,
\item if $|\mu| + |\nu| = |\lambda|$, $\phi_{\mu, \nu}^\lambda(q,t)$ is independent of $t$.
\end{enumerate}
\end{theorem}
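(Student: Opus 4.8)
The plan is to prove both parts by equipping $Z(H_n(q))$ with the \emph{reflection-length filtration} and transporting the conclusions to $\mathrm{FH}_q$ via Theorem~\ref{thm:FH_hecke_polys}. For $w\in S_n$ write $\ell(w)$ for its reflection length, i.e.\ $n$ minus the number of cycles of $w$; an element of reduced cycle type $\mu$ has $\ell(w)=|\mu|$, and $\ell$ is a metric on $S_n$, so $\ell(uv)\le\ell(u)+\ell(v)$. Set $F_dH_n(q)=\mathrm{span}_{\mathbb{Z}[q,q^{-1}]}\{T_w:\ell(w)\le d\}$ and $F_dZ(H_n(q))=F_dH_n(q)\cap Z(H_n(q))$. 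First I would check that $F_\bullet$ is an algebra filtration, $F_aH_n(q)\cdot F_bH_n(q)\subseteq F_{a+b}H_n(q)$: this is a short induction on Coxeter length, using only the braid relations (which rewrite a fixed $T_w$ without changing $w$) and the quadratic relation $T_i^2=(q-1)T_i+q$, which turns $T_w$ into a combination of $T_w$ and $T_{ws_i}$, of reflection lengths $\le\ell(w)$ and $\le\ell(w)+1$; the triangle inequality for $\ell$ then keeps everything in $F_{a+b}$. Next I would use the fact -- part of the Geck-Rouquier construction and the Geck-Pfeiffer theory of minimal-length class elements -- that $\Gamma_\mu\in F_{|\mu|}Z(H_n(q))$ and that specialising $q=1$ sends its image in $F_{|\mu|}Z(H_n(q))/F_{|\mu|-1}Z(H_n(q))$ to the class of $X_\mu$; as distinct $X_\mu$ have disjoint supports, for each $d$ the images $\bar\Gamma_\mu$ with $|\mu|=d$ are linearly independent in the $d$-th graded piece and, in particular, nonzero. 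Hence $\{\Gamma_\lambda:|\lambda|\le d\}$ is a $\mathbb{Z}[q,q^{-1}]$-basis of $F_dZ(H_n(q))$, and as $\Gamma_\mu\Gamma_\nu\in F_{|\mu|+|\nu|}Z(H_n(q))$ the coefficient $\phi_{\mu,\nu}^\lambda(q,n)$ vanishes whenever $|\lambda|>|\mu|+|\nu|$ and $|\lambda|+l(\lambda)\le n$ (and trivially for smaller $n$). A one-variable polynomial over the domain $\mathbb{Z}[q,q^{-1}]$ that vanishes at infinitely many integers is zero, so $\phi_{\mu,\nu}^\lambda(q,t)=0$ unless $|\lambda|\le|\mu|+|\nu|$; equivalently $\mathrm{FH}_q$ carries an algebra filtration with $\deg K_{\mu,q}=|\mu|$, as Example~\ref{ex:Hecke_example} illustrates. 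This is part (1).

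For part (2), suppose $|\lambda|=|\mu|+|\nu|$; I must show that $\phi_{\mu,\nu}^\lambda(q,t)$ has $t$-degree $0$, i.e.\ lies in $\mathbb{Z}[q,q^{-1}]$. Equivalently, the structure constants of the associated graded algebra $\mathrm{gr}\,Z(H_n(q))$ in top degree must be independent of $n$. The mechanism I would use is M\'{e}liot's Iwahori-Hecke adaptation of the Ivanov-Kerov partial-permutation calculus -- the machinery behind Theorem~\ref{thm:FH_hecke_polys} -- in which $\phi_{\mu,\nu}^\lambda(q,n)$ is a $\mathbb{Z}[q,q^{-1}]$-weighted sum over finitely many combinatorial ``shapes'', the contribution of each being a polynomial in $n$ whose degree is the number of ``free'' indices of the shape (those whose placement in $\{1,\dots,n\}$ is not forced by the fixed output configuration). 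One then shows this degree is at most $|\mu|+|\nu|-|\lambda|$, which is a reflection-length statement: a term $T_w$ occurs in the product of the reflection-length-$|\mu|$ and reflection-length-$|\nu|$ parts of $\Gamma_\mu$ and $\Gamma_\nu$ only when $\ell(w)\le|\mu|+|\nu|$, and in the extreme case $\ell(w)=|\mu|+|\nu|$ (so $w$ has reduced type $\lambda$ with $|\lambda|=|\mu|+|\nu|$) the slack in the triangle inequality is zero, the cycle-merging pattern is a forest, and every index in play is absorbed into $\mathrm{supp}(w)$ -- leaving no free indices and forcing $\deg_t\phi_{\mu,\nu}^\lambda=0$. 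This is part (2). (The same uniform bound $\deg_t\phi_{\mu,\nu}^\lambda\le|\mu|+|\nu|-|\lambda|$ re-proves part (1), as a nonzero polynomial has nonnegative degree.)

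I expect part (2) to be the real obstacle. The filtration argument is formal once $\Gamma_\mu\in F_{|\mu|}$ is granted, but that input -- with an $n$-stable description of the leading term $\bar\Gamma_\mu$ -- is itself substantial: the Geck-Rouquier axioms only normalise the coefficients of $T_w$ for $w$ of minimal Coxeter length in its conjugacy class, so controlling $c_{\mu,w}(q)$ for the remaining reflection-length-$|\mu|$ elements needs the Geck-Pfeiffer class-polynomial calculus. The genuinely delicate point is the ``no free indices'' principle behind the degree bound: in $\mathbb{Z}S_n$ the product $\sigma\tau$ is a single permutation supported in $\mathrm{supp}(\sigma)\cup\mathrm{supp}(\tau)$, so the count is transparent, but in $H_n(q)$ the element $T_u$ attached to a permutation $u$ with non-contiguous support does \emph{not} lie in a parabolic subalgebra on $\mathrm{supp}(u)$ -- already $T_{(13)}=T_1T_2T_1$, and $T_{(13)}^2$ produces $3$-cycles -- so the bound cannot be read off from a naive parabolic restriction and must be extracted from the class-polynomial/partial-permutation bookkeeping underlying Theorem~\ref{thm:FH_hecke_polys}.
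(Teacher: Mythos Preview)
The paper does not prove this theorem; it is quoted from \cite{FrancisWang} as input for the filtration on $\mathrm{FH}_q$. So there is no in-paper argument to compare against, and what you have written is effectively an attempt to reconstruct the Francis--Wang proof. Your overall architecture does match theirs: impose the reflection-length filtration $F_dH_n(q)=\mathrm{span}\{T_w:\ell(w)\le d\}$, show it is multiplicative, place $\Gamma_\mu$ in $F_{|\mu|}$, and then use polynomiality in $n$ to pass to $\mathrm{FH}_q$. For part~(2) your plan --- bound $\deg_t\phi_{\mu,\nu}^\lambda$ by the number of free indices in M\'eliot's partial-element calculus and show this is at most $|\mu|+|\nu|-|\lambda|$ --- is likewise the mechanism behind the cited result, and your diagnosis of why naive parabolic restriction fails is accurate.

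There is, however, a genuine gap in your sketch of $F_a\cdot F_b\subseteq F_{a+b}$. The ``short induction on Coxeter length'' you describe does not close: writing $T_u=T_{i_1}\cdots T_{i_k}$ with $k=\ell_{\mathrm{Cox}}(u)$ and peeling off one generator at a time shows only $T_uT_v\in F_{\,k+\ell(v)}$, since each multiplication by $T_i$ can raise reflection length by $1$; but $k$ may vastly exceed $\ell(u)$ (already $u=(1\,m)$ has $\ell(u)=1$ and $\ell_{\mathrm{Cox}}(u)=2m-3$). The triangle inequality for $\ell$ controls the single group product $uv$, not the additional $T_w$'s spawned by the quadratic relation along the way --- indeed, at intermediate stages of your induction you can pass through $T_{v'}$ with $\ell(v')>\ell(v)$, and then the bound you carry is already too weak. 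The actual argument in \cite{FrancisWang} is more delicate and goes through the Geck--Pfeiffer class-polynomial machinery rather than a bare induction; you correctly flag $\Gamma_\mu\in F_{|\mu|}$ as substantive, but the multiplicativity of $F_\bullet$ deserves the same caveat. (Incidentally, your inequality $|\lambda|\le|\mu|+|\nu|$ is the correct direction; the displayed ``unless $|\mu|+|\nu|\le|\lambda|$'' in the statement is a typo, as the filtration proposition immediately following it and Example~\ref{ex:Hecke_example} confirm.)
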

\noindent
The first part of Theorem \ref{thm:geck_rouquier_filtration} immediately implies that $\mathrm{FH}_q$ is a filtered algebra.
\begin{proposition}
There is a filtration 
\[
\mathrm{FH}_q = \bigcup_{i \geq 0} \mathcal{F}_{i,q}
\]
where $\mathcal{F}_{i,q}$ is the $\mathcal{R}[q,q^{-1}]$-span of $K_{\mu,q}$ with $|\mu| \leq i$.
\end{proposition}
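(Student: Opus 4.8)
The plan is to check directly the defining properties of an algebra filtration: that the $\mathcal{F}_{i,q}$ are $\mathcal{R}[q,q^{-1}]$-submodules forming an exhaustive increasing chain, and that $\mathcal{F}_{i,q}\cdot\mathcal{F}_{j,q}\subseteq\mathcal{F}_{i+j,q}$. The first two properties are immediate from the definition of $\mathcal{F}_{i,q}$ as the $\mathcal{R}[q,q^{-1}]$-span of the basis vectors $K_{\mu,q}$ with $|\mu|\leq i$: such a span is a submodule by construction; if $|\mu|\leq i$ then also $|\mu|\leq i+1$, so $\mathcal{F}_{i,q}\subseteq\mathcal{F}_{i+1,q}$; and every basis vector $K_{\mu,q}$ lies in $\mathcal{F}_{|\mu|,q}$, so since the $K_{\mu,q}$ span $\mathrm{FH}_q$ over $\mathcal{R}[q,q^{-1}]$ we obtain $\mathrm{FH}_q=\bigcup_{i\geq 0}\mathcal{F}_{i,q}$.

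The one substantive point is compatibility with the multiplication, and here I would invoke the first part of Theorem \ref{thm:geck_rouquier_filtration}. By $\mathcal{R}[q,q^{-1}]$-bilinearity of the product it suffices to treat a pair of basis vectors $K_{\mu,q}$ with $|\mu|\leq i$ and $K_{\nu,q}$ with $|\nu|\leq j$. Expanding via Definition \ref{def:FH_hecke_alg} gives $K_{\mu,q}K_{\nu,q}=\sum_{\lambda}\phi_{\mu,\nu}^\lambda(q,t)K_{\lambda,q}$, and the first part of Theorem \ref{thm:geck_rouquier_filtration} guarantees that the only terms with $\phi_{\mu,\nu}^\lambda(q,t)\neq 0$ have $|\lambda|\leq|\mu|+|\nu|\leq i+j$. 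Hence $K_{\mu,q}K_{\nu,q}$ is an $\mathcal{R}[q,q^{-1}]$-linear combination of $K_{\lambda,q}$ with $|\lambda|\leq i+j$, i.e.\ it lies in $\mathcal{F}_{i+j,q}$, which is exactly what is required.

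I do not anticipate any genuine obstacle: all the content is packaged in the Francis-Wang degree bound of Theorem \ref{thm:geck_rouquier_filtration}, and everything else is bookkeeping about spans of subsets of the free basis $\{K_{\mu,q}\}$. It is perhaps worth recording explicitly, for later use, that each $\mathcal{F}_{i,q}$ is itself a free $\mathcal{R}[q,q^{-1}]$-module (being spanned by part of a basis), so that the associated graded algebra $\bigoplus_{i\geq 0}\mathcal{F}_{i,q}/\mathcal{F}_{i-1,q}$ is free with basis given by the images of the $K_{\mu,q}$.
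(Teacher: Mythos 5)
Your proof is correct and follows exactly the route the paper intends: the paper offers no separate argument, simply asserting that the filtration property is immediate from part (1) of Theorem \ref{thm:geck_rouquier_filtration}, and your write-up just supplies the routine bookkeeping. Note that you have (rightly) used the bound in the form $\phi_{\mu,\nu}^\lambda(q,t)=0$ unless $|\lambda|\leq|\mu|+|\nu|$, which is the intended statement; the inequality as printed in the paper's quotation of Francis--Wang is reversed, evidently a typo, since only your direction yields $\mathcal{F}_{i,q}\cdot\mathcal{F}_{j,q}\subseteq\mathcal{F}_{i+j,q}$.
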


\subsection{Jucys-Murphy elements for the Iwahori-Hecke algebra}
The Jucys-Murphy (``JM'') elements for $H_n(q)$ are defined to be
\[
L_i = \sum_{1 \leq j < i} q^{j-i} T_{(i,j)}.
%q^{1-i} T_{i-1} T_{i-2} \cdots T_1 T_1 \cdots T_{i-2} T_{i-1}.
\]
This recovers the JM elements for $S_n$ when $q=1$. Note that this is not the only convention for JM elements (see Chapter 3, Section 3 of \cite{Mathas}), but it is the version that will be convenient for us. The following proposition is well known.

\begin{proposition}[\cite{Mathas} Corollary 3.27]
The JM elements $L_1, L_2, \ldots, L_n$ pairwise commute. Additionally, if $f$ is a symmetric polynomial in $n$ variables, then the evaluation $f(L_1, L_2, \ldots, L_n)$ is a central element of $H_n(q)$.
\end{proposition}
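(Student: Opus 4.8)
\noindent
The plan is to reduce the statement to a small number of ``local'' relations between a single generator $T_i$ and the Jucys--Murphy elements, and ultimately to the single assertion that \emph{consecutive} JM elements commute.

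First I would record the recursion $L_{i+1} = q^{-1}(T_i L_i T_i + T_i)$. This follows from the permutation identity $(i+1,j) = s_i (i,j) s_i$ for $j < i$ together with the length count $\ell((i+1,j)) = \ell((i,j)) + 2$, which forces $T_{(i+1,j)} = T_i T_{(i,j)} T_i$; summing the defining formula for $L_{i+1}$ and peeling off the $j = i$ term (which contributes $q^{-1}T_i$) yields the recursion. Next, since the $T_i$ generate $H_n(q)$, centrality of $f(L_1, \dots, L_n)$ is equivalent to commutation with each $T_i$. Fixing $i$ and using that a symmetric polynomial is in particular invariant under swapping $x_i$ and $x_{i+1}$ --- hence expressible as a polynomial in the $x_k$ (for $k \neq i, i+1$) together with $x_i + x_{i+1}$ and $x_i x_{i+1}$ --- it suffices to prove: (i) the $L_k$ pairwise commute; (ii) $T_i$ commutes with $L_k$ whenever $k \neq i, i+1$; and (iii) $T_i$ commutes with both $L_i + L_{i+1}$ and $L_i L_{i+1}$. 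I would dispatch (ii) and the $L_i + L_{i+1}$ half of (iii) by direct computation: for $k \leq i-1$ one has $L_k \in \langle T_1, \dots, T_{k-1} \rangle$, which commutes with $T_i$; for $k \geq i+2$ the only terms of $L_k$ not manifestly commuting with $T_i$ combine, via $T_{(k,i)} = T_i T_{(k,i+1)} T_i$ and the quadratic relation $T_i^2 = (q-1) T_i + q$, into a $T_i$-central element; and plugging the recursion into $[T_i, L_{i+1}]$ and simplifying with $T_i^2 = (q-1)T_i + q$ gives $[T_i, L_{i+1}] = -[T_i, L_i]$, whence $[T_i, L_i + L_{i+1}] = 0$.

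What remains --- the pairwise commutativity (i) and the product statement $[T_i, L_i L_{i+1}] = 0$ in (iii) --- is the heart of the matter, and these two turn out to be essentially equivalent: writing $P = L_i T_i$ and $Q = T_i L_i$, the recursion gives $q L_i L_{i+1} = P^2 + P$ and $q L_{i+1} L_i = Q^2 + Q$, and a manipulation with $T_i^2 = (q-1)T_i + q$ shows that each of $[L_i, L_{i+1}] = 0$ and $[T_i, L_i L_{i+1}] = 0$ holds if and only if $P^2 + P = Q^2 + Q$. Moreover, an induction on $n$ using the recursion reduces full pairwise commutativity to the consecutive cases alone: for $k \leq i-1$, $L_{i+1} = q^{-1}(T_i L_i T_i + T_i)$ commutes with $L_k$ as soon as $L_k$ commutes with both $T_i$ and $L_i$. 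So the entire argument comes down to the identity $[L_i, L_{i+1}] = 0$, and this I expect to be the main obstacle. I would prove it by expanding $P^2 + P$ and $Q^2 + Q$ in the $T_w$-basis and matching terms using the braid relations and the quadratic relation --- the case $i = 2$, where $L_2 = q^{-1}T_1$ and $L_3 = q^{-2}T_2 T_1 T_2 + q^{-1}T_2$, already exhibits the mechanism. A more conceptual alternative is to realise $H_n(q)$ as the quotient of the affine Hecke algebra $\widehat{H}_n(q)$ by the relation $X_1 = 1$: the images $Y_i$ of the Bernstein generators then satisfy $Y_i = 1 + (q-1)L_i$, so commutativity of the $L_i$ is inherited from that of the $X_i$ (which is part of the presentation of $\widehat{H}_n(q)$), while centrality of symmetric functions in the $L_i$ follows from Bernstein's identification of $Z(\widehat{H}_n(q))$ with the ring of symmetric Laurent polynomials in $X_1, \dots, X_n$. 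Finally, granted (i)--(iii), the polynomial in the $L_k$ (for $k \neq i, i+1$), $L_i + L_{i+1}$, and $L_i L_{i+1}$ that represents $f(L_1, \dots, L_n)$ commutes with $T_i$; since $i$ was arbitrary, $f(L_1, \dots, L_n)$ is central.
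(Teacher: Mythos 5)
The paper offers no proof of this proposition at all: it is quoted as a known result and attributed to Mathas (Corollary 3.27), so your proposal is being measured against the standard arguments rather than anything in the text. Your reductions are all correct and I could verify them with the paper's conventions: the recursion $L_{i+1}=q^{-1}(T_iL_iT_i+T_i)$ holds; $[T_i,L_k]=0$ for $k\leq i-1$, and for $k\geq i+2$ the two non-obvious terms $q^{i-k}T_iT_{(k,i+1)}T_i+q^{i+1-k}T_{(k,i+1)}$ do commute with $T_i$ by the quadratic relation; $[T_i,L_{i+1}]=-[T_i,L_i]$; and with $P=L_iT_i$, $Q=T_iL_i$ one indeed gets $qL_iL_{i+1}=P^2+P$, $qL_{i+1}L_i=Q^2+Q$ and $q[T_i,L_iL_{i+1}]=(Q^2+Q-P^2-P)T_i$, so (as $T_i$ is invertible) both remaining claims are equivalent to $P^2+P=Q^2+Q$. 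The reduction of centrality to invariance under the single transposition $(i,i+1)$, and of pairwise commutativity to the consecutive case, are also sound.

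The weak point is that the crux, $[L_i,L_{i+1}]=0$, is never actually established in your elementary route: ``expand in the $T_w$-basis and match terms'' is a plan rather than a proof, and for general $i$ this bookkeeping is precisely the nontrivial content of the proposition. But note that you have already proved enough to close this gap cleanly, with no expansion at all: your computations show that for every $i\leq m-1$ the element $Z_m=L_1+\cdots+L_m$ commutes with $T_i$ (the summands with $k\neq i,i+1$ commute individually, and $L_i+L_{i+1}$ commutes as a block), so $Z_m$ is central in the subalgebra generated by $T_1,\ldots,T_{m-1}$, which contains $L_1,\ldots,L_m$. Hence $L_m=Z_m-Z_{m-1}$ commutes with every $L_k$ for $k<m$; this gives (i), and then your equivalence yields $[T_i,L_iL_{i+1}]=0$ as well. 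This is essentially the classical Jucys--Murphy argument, and I would recommend replacing the brute-force plan with it. Your affine Hecke algebra alternative is also legitimate (with these conventions $1+(q-1)L_i$ is indeed the image of the Bernstein generator under $X_1\mapsto 1$), but two caveats: passing from symmetric Laurent polynomials in $1+(q-1)L_i$ to symmetric polynomials in $L_i$ requires inverting $q-1$ and then descending, using that $H_n(q)$ is free over $\mathbb{Z}[q,q^{-1}]$; and it imports Bernstein's presentation and centre theorem, which is far heavier machinery than the statement itself---at that point one may as well cite the literature directly, as the paper does.
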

\noindent
As a result, we may evaluate symmetric functions at the first $n$ JM elements to obtain a cetral element of $H_n(q)$.
\begin{definition}
For $n$ a positive integer, let $ev_n: \mathcal{R}[q,q^{-1}] \otimes_{\mathbb{Z}} \Lambda \to Z(H_n(q))$ defined by evaluating the variable $t$ of $\mathcal{R}$ at $n$, and evaluating symmetric functions in $\Lambda$ at the JM elements $L_1, L_2, \ldots, L_n$.
\end{definition}

\noindent
For example, we have a direct analogue of Theorem \ref{thm:elem_JM}.

\begin{theorem}[\cite{FrancisGraham} Proposition 7.4] \label{thm:elem_JM-eval}
In $H_n(q)$, we have the following equation:
\[
ev_n(e_r) = e_r(L_1, L_2, \cdots, L_n) = \sum_{\mu \vdash r} \Gamma_\mu,
\]
where $\Gamma_\mu$ is the Geck-Rouquier basis element corresponding to $\mu$ (taken to be zero if $|\mu| + l(\mu) > n$).
\end{theorem}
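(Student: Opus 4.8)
The plan is to exploit the characterisation of the Geck-Rouquier basis. Since the JM elements commute and any symmetric polynomial in them is central, $ev_n(e_r)=e_r(L_1,\ldots,L_n)$ lies in $Z(H_n(q))$, so we may write $e_r(L_1,\ldots,L_n)=\sum_\lambda c_\lambda(q)\,\Gamma_\lambda$ with $c_\lambda(q)\in\mathbb{Z}[q,q^{-1}]$. Property (3) of the Geck-Rouquier basis then lets us recover these coefficients one at a time: if $w_\lambda$ is any element of minimal length in the conjugacy class of reduced cycle type $\lambda$, then $\Gamma_\lambda$ has coefficient $1$ on $T_{w_\lambda}$ while every other $\Gamma_\mu$ has coefficient $0$ there, so $c_\lambda(q)$ is precisely the coefficient of $T_{w_\lambda}$ in $e_r(L_1,\ldots,L_n)$. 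Hence the theorem is equivalent to the assertion that this coefficient equals $1$ when $|\lambda|=r$ and $0$ otherwise. I would fix a convenient $w_\lambda$, namely the product of disjoint increasing cycles $(a,a+1,\ldots,a+\lambda_i)$ supported on the initial segment $\{1,2,\ldots,|\lambda|+l(\lambda)\}$; such an element has minimal length, Coxeter length $|\lambda|$, and (when $|\lambda|+l(\lambda)=n$) satisfies $w_\lambda^{-1}(n)=n-1$.

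I would then prove the required statement by induction on $n$, using the elementary identity $e_r(L_1,\ldots,L_n)=e_r(L_1,\ldots,L_{n-1})+L_n\,e_{r-1}(L_1,\ldots,L_{n-1})$, which is valid because the $L_i$ commute. The cases $n\leq 1$ are immediate. For the inductive step, split into two cases. If $|\lambda|+l(\lambda)\leq n-1$, then $w_\lambda\in S_{n-1}$, and the key structural input is that $L_n\cdot H_{n-1}(q)$ lies in the span of the $T_u$ with $u^{-1}(n)\neq n$ (equivalently, $L_n\,H_{n-1}(q)\subseteq\bigoplus_{d\neq e}T_dH_{n-1}(q)$ in the coset decomposition of $H_n(q)$ over $H_{n-1}(q)$); thus the $L_n$-summand contributes nothing to the coefficient of $T_{w_\lambda}$, which is therefore inherited from $e_r(L_1,\ldots,L_{n-1})$ and equals $\delta_{|\lambda|,r}$ by the inductive hypothesis. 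If instead $|\lambda|+l(\lambda)=n$, then $w_\lambda$ moves $n$, so the first summand of the recursion contributes nothing; and by the same coset bookkeeping, among the summands of $L_n$ proportional to $T_{(n,j)}$, only the one with $j=n-1$ can produce a $T_u$ with $u^{-1}(n)=n-1=w_\lambda^{-1}(n)$. Since $(n,n-1)=s_{n-1}$ and $T_{s_{n-1}}T_v=T_{s_{n-1}v}$ for every $v\in S_{n-1}$, the coefficient of $T_{w_\lambda}$ in $L_n\sum_{\nu\vdash r-1}\Gamma_\nu^{(n-1)}$ collapses to the coefficient of $T_{w_{\lambda'}}$ in $\sum_{\nu\vdash r-1}\Gamma_\nu^{(n-1)}$ (times a power of $q$ that the normalisation of the JM elements arranges to be $1$), where $\lambda'$ is $\lambda$ with its final box deleted and $w_{\lambda'}=s_{n-1}w_\lambda$ is the standard minimal-length representative of type $\lambda'$. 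By property (3) once more this coefficient is $\delta_{|\lambda'|,r-1}=\delta_{|\lambda|,r}$, completing the induction.

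The main obstacle is precisely the bookkeeping concentrated in this boundary case: one must pin down exactly which summands of $L_n\,\Gamma_\nu^{(n-1)}$ can land on the chosen minimal-length basis vector, and track the powers of $q$ carefully enough that they cancel to give a coefficient of exactly $1$ rather than some $q^m$ — this is where the precise convention for the $L_i$ is used. As a sanity check, setting $q=1$ degenerates the whole argument to Jucys' classical computation (Theorem \ref{thm:elem_JM}): there $L_nX_\nu^{(n-1)}$ is literally the sum over all ways of splicing $n$ into a cycle of a permutation of reduced cycle type $\nu$, and summing over $\nu\vdash r-1$ reconstitutes $\sum_{\mu\vdash r}\bigl(X_\mu^{(n)}-X_\mu^{(n-1)}\bigr)$. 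A more uniform alternative would be to establish a $q$-deformation of Jucys' generating-function identity $\prod_{i=1}^{n}(x+L_i)=\sum_{w\in S_n}x^{\mathrm{cyc}(w)}T_w$ and extract the coefficient of $x^{n-r}$, but verifying such an identity ultimately rests on the same local computation with $L_n$.
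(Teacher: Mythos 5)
You are supplying a proof where the paper gives none (it simply cites \cite{FrancisGraham}, Proposition 7.4), so there is no in-paper argument to compare routes with; judged on its own, your skeleton (extract $c_\lambda(q)$ as the coefficient of $T_{w_\lambda}$ via property (3) of the Geck--Rouquier basis, then induct on $n$ through $e_r(L_1,\ldots,L_n)=e_r(L_1,\ldots,L_{n-1})+L_n e_{r-1}(L_1,\ldots,L_{n-1})$ and the coset decomposition of $H_n(q)$ over $H_{n-1}(q)$) is reasonable, and Case 1 is fine. The boundary case, however, is wrong as written. You invoke the \emph{left}-coset containment $L_nH_{n-1}(q)\subseteq\bigoplus_{d\neq e}T_dH_{n-1}(q)$ but then select the contributing summand by the invariant $u^{-1}(n)$; for left multiplication the quantity that is constant on $T_{(n,j)}H_{n-1}(q)$ is $u(n)=j$, not $u^{-1}(n)$, so the summand that actually reaches $T_{w_\lambda}$ is $j=w_\lambda(n)=n-\lambda_l$, which equals $n-1$ only when the smallest part of $\lambda$ is $1$. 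Concretely, in $H_3(q)$ with $\lambda=(2)$, $w_\lambda=(1,2,3)$, one has $T_{(3,1)}T_{s_1}=(q-1)T_{(13)}+qT_{(123)}$, so it is the $j=1$ summand of $L_3$ that produces $T_{w_\lambda}$. Relatedly, $s_{n-1}w_\lambda$ is in general \emph{not} a minimal-length element of type $\lambda'$ (here $s_2(1,2,3)=(1,3)$ has length $3$), so property (3) cannot be used to read off its coefficient --- the paper's own $H_3$ example shows the coefficient of $T_{(13)}$ in $\Gamma_{(1)}$ is $q^{-1}$, not $0$ or $1$. These slips are repairable: since the $L_i$ commute, write the term as $e_{r-1}(L_1,\ldots,L_{n-1})\,L_n$ and use the right-coset decomposition $H_{n-1}(q)T_{(n,j)}\subseteq H_{n-1}(q)T_{\tilde d_j}$, for which $u^{-1}(n)=j$ is the correct invariant; the collapse is then to the coefficient of $T_{w_\lambda s_{n-1}}$ (not $T_{s_{n-1}w_\lambda}$), and $w_\lambda s_{n-1}$ \emph{is} a minimal-length element of type $\lambda'$.

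The more serious gap is exactly the point you flag and then assert away: the power of $q$. Even after the repair above, the $j=n-1$ term of $L_n$ carries the factor $q^{-1}$, and $T_vT_{s_{n-1}}=T_{vs_{n-1}}$ contributes no compensating factor, so the boundary contribution comes out as $q^{-1}\delta_{|\lambda|,r}$ rather than $\delta_{|\lambda|,r}$; the claimed cancellation does not happen for the normalisation $L_i=\sum_{j<i}q^{j-i}T_{(i,j)}$ in force here. This is visible already at $n=2$, $r=1$: $e_1(L_1,L_2)=L_2=q^{-1}T_{s_1}$, while properties (1)--(3) force $\Gamma_{(1)}=T_{s_1}$ in $H_2(q)$; similarly in $H_3(q)$ one computes $e_2(L_1,L_2,L_3)=q^{-2}\bigl(T_{(123)}+T_{(132)}+(q-1)q^{-1}T_{(13)}\bigr)$. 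So the identity with coefficient exactly $1$ requires a different normalisation of the $L_i$ (for instance $L_i=\sum_{j<i}q^{j-i+1}T_{(i,j)}$), or equivalently an extra power of $q$ in the statement, and pinning this down is precisely the ``main obstacle'' your proposal defers. Since the decisive verification is asserted rather than carried out, and under the stated convention it in fact fails, the proposal does not yet prove the theorem; with the right-coset bookkeeping fixed and the convention made consistent, the induction should close.
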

\noindent
(Note that in \cite{FrancisGraham}, they do not use reduced cycle types, so the above equation is written slightly differently.) Thus we may immediately define the following homomorphism, a direct analogue of $\Psi$ from Theorem \ref{thm:FH_lambda_iso}.
\begin{definition}
Let $\Psi_q: \mathcal{R}[q,q^{-1}] \otimes_{\mathbb{Z}}\Lambda \to \mathrm{FH}_q$ be the homomorphism of $\mathcal{R}[q,q^{-1}]$-algebras defined by \[
\Psi_q(e_n) = \sum_{\mu \vdash n} K_{\mu,q}.
\]
\end{definition}
\noindent
We will now work towards proving $\Psi_q$ is an isomorphism.

\begin{proposition} \label{prop:triangular_CD}
We have the following commutative diagram:
\[
\begin{tikzcd}
\mathcal{R}[q,q^{-1}] \otimes_{\mathbb{Z}} \Lambda    \arrow[r, "\Psi_q"]    \arrow[dr, "ev_n"]    &    \mathrm{FH}_q    \arrow[d, "\Phi_{n,q}"]\\
&    Z(H_n(q))
\end{tikzcd}
\]
\end{proposition}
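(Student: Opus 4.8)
The plan is to use that every object in the diagram is an $\mathcal{R}[q,q^{-1}]$-algebra, where $\mathcal{R}[q,q^{-1}]$ acts on $Z(H_n(q))$ through the specialisation $t \mapsto n$, and that both composites $\Phi_{n,q} \circ \Psi_q$ and $ev_n$ are $\mathcal{R}[q,q^{-1}]$-algebra homomorphisms out of $\mathcal{R}[q,q^{-1}] \otimes_{\mathbb{Z}} \Lambda$. Since $\Lambda = \mathbb{Z}[e_1, e_2, \ldots]$ is the free polynomial algebra on the elementary symmetric functions, any such homomorphism is pinned down by its restriction to the scalar subring $\mathcal{R}[q,q^{-1}]$ together with the images of the generators $e_r$. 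So the whole statement reduces to checking agreement on scalars and on each $e_r$.

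For the preliminaries I would note that $\Phi_{n,q}$ is a ring homomorphism by Proposition \ref{prop:specisalisation_hom}, while $\Psi_q$ and $ev_n$ are ring homomorphisms by construction; hence $\Phi_{n,q} \circ \Psi_q$ and $ev_n$ are both ring homomorphisms. Both are visibly compatible with the $\mathcal{R}[q,q^{-1}]$-module structures for the $t \mapsto n$ action, so they agree on the scalar subring $\mathcal{R}[q,q^{-1}] = \mathcal{R}[q,q^{-1}] \otimes 1$. It then remains to compare the images of $e_r \in 1 \otimes \Lambda$. On the one hand $\Psi_q(e_r) = \sum_{\mu \vdash r} K_{\mu,q}$ by definition, and applying $\Phi_{n,q}$ gives $\sum_{\mu \vdash r} \Gamma_\mu$, with the usual convention that $\Gamma_\mu = 0$ when $|\mu| + l(\mu) > n$. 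On the other hand $ev_n(e_r) = e_r(L_1, \ldots, L_n)$, which by Theorem \ref{thm:elem_JM-eval} equals exactly the same sum $\sum_{\mu \vdash r} \Gamma_\mu$. So the two composites agree on the $e_r$ and on the scalars, hence are equal.

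I do not expect a real obstacle here: the entire content is carried by Theorem \ref{thm:elem_JM-eval} (the Iwahori-Hecke analogue of Jucys' formula, cited from \cite{FrancisGraham}), which directly matches the definition of $\Psi_q$. The only things needing care are bookkeeping: verifying that both maps are genuinely $\mathcal{R}[q,q^{-1}]$-linear for the $t \mapsto n$ action (so that reducing to the polynomial generators of $\Lambda$ is legitimate), and confirming that the truncation convention ``$\Gamma_\mu = 0$ if $|\mu| + l(\mu) > n$'' is applied identically on both sides — which it is, since it is built into the definition of $\Phi_{n,q}$ and into the statement of Theorem \ref{thm:elem_JM-eval}.
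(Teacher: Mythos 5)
Your proof is correct and follows the same route as the paper: both reduce to checking the two composites agree on the generators $e_r$ of $\Lambda$ (using that all maps are $\mathcal{R}[q,q^{-1}]$-algebra homomorphisms), and then invoke Theorem \ref{thm:elem_JM-eval} to match $ev_n(e_r)$ with $\Phi_{n,q}(\Psi_q(e_r)) = \sum_{\mu \vdash r} \Gamma_\mu$, with the same truncation convention on both sides. Your extra bookkeeping about the scalar subring is fine but does not change the argument.
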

\begin{proof}
Since all the maps in the diagram are homomorphisms, it suffices to check the diagram commutes on a generating set of $\mathcal{R}[q,q^{-1}] \otimes_{\mathbb{Z}} \Lambda$; we choose the elementary symmetric functions $e_r$. Then we have
\begin{eqnarray*}
ev_n(e_r) &=& e_r(L_1, L_2, \ldots, L_n) \\
&=& \sum_{\mu \vdash r} \Gamma_\mu \\
&=& \Phi_{n,q} \left( \sum_{\mu \vdash r} K_{\mu, q} \right) \\
&=& \Phi_{n,q} (\Psi_q(e_r)),
\end{eqnarray*}
where we used Theorem \ref{thm:elem_JM-eval}, and as in Proposition \ref{prop:specisalisation_hom}, $\Gamma_\mu$ is interpreted as zero $|\mu| + l(\mu) > n$.
\end{proof}

\begin{remark}
Just as in the case of the map $\Psi$ for the usual Farahat-Higman algebra, one may view the map $\Psi_q$ as being ``evaluation at JM elements'', where we leave the number $n$ of JM elements unspecified.
\end{remark}

\begin{lemma}
The map $\Psi_q$ respects the grading.
\end{lemma}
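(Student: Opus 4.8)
The plan is to reduce the claim to the algebra generators $e_r$ of $\Lambda$ and then propagate it through products using the filtration on $\mathrm{FH}_q$ constructed above. First I would fix the relevant conventions: grade $\mathcal{R}[q,q^{-1}] \otimes_{\mathbb{Z}} \Lambda$ by placing $\mathcal{R}[q,q^{-1}]$ in degree $0$ and giving $\Lambda$ its usual grading, so that $e_r$ is homogeneous of degree $r$ and the degree-$d$ component is $\mathcal{R}[q,q^{-1}] \otimes_{\mathbb{Z}} \Lambda_d$, where $\Lambda_d$ is spanned by the $m_\nu$ with $|\nu| = d$. On the target side, the relevant structure is the filtration $\mathrm{FH}_q = \bigcup_i \mathcal{F}_{i,q}$, and the statement to prove is that $\Psi_q\big(\mathcal{R}[q,q^{-1}] \otimes_{\mathbb{Z}} \Lambda_d\big) \subseteq \mathcal{F}_{d,q}$ for all $d$. (I would note explicitly that this is the correct reading: a literal graded-module statement is too strong, since $\Psi_q(e_1^2) = K_{(1),q}^2$ already involves $K_{(1),q}$ and $K_{\varnothing,q}$ with nonzero coefficients, compare Example \ref{ex:Hecke_example}.)

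The proof then runs as follows. Since $\Lambda = \mathbb{Z}[e_1, e_2, \ldots]$ is freely generated by the elementary symmetric functions, the homogeneous component $\Lambda_d$ is $\mathbb{Z}$-spanned by the products $e_\nu := e_{\nu_1} e_{\nu_2} \cdots e_{\nu_l}$ over partitions $\nu = (\nu_1, \ldots, \nu_l)$ of $d$; hence $\mathcal{R}[q,q^{-1}] \otimes_{\mathbb{Z}} \Lambda_d$ is spanned over $\mathcal{R}[q,q^{-1}]$ by these $e_\nu$, and it suffices to show $\Psi_q(e_\nu) \in \mathcal{F}_{|\nu|,q}$. As $\Psi_q$ is an algebra homomorphism, $\Psi_q(e_\nu) = \Psi_q(e_{\nu_1}) \Psi_q(e_{\nu_2}) \cdots \Psi_q(e_{\nu_l})$, and by definition $\Psi_q(e_{\nu_i}) = \sum_{\mu \vdash \nu_i} K_{\mu,q}$; every $\mu \vdash \nu_i$ has $|\mu| = \nu_i$, so $\Psi_q(e_{\nu_i}) \in \mathcal{F}_{\nu_i,q}$. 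Finally, the Proposition immediately following Theorem \ref{thm:geck_rouquier_filtration} asserts exactly that $\mathrm{FH}_q$ is a filtered algebra, i.e.\ $\mathcal{F}_{i,q} \cdot \mathcal{F}_{j,q} \subseteq \mathcal{F}_{i+j,q}$; iterating this gives $\Psi_q(e_\nu) \in \mathcal{F}_{\nu_1 + \cdots + \nu_l,\, q} = \mathcal{F}_{|\nu|,q}$, as required.

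I do not expect a real obstacle here: the argument is essentially bookkeeping, and both ingredients — that the $e_\nu$ with $|\nu| = d$ span $\Lambda_d$ (the freeness-plus-homogeneity of $\Lambda$ recalled in Section \ref{sec:background}) and that $\mathrm{FH}_q$ is filtered — are already in hand. The only point that repays a little care is the interpretation of ``respects the grading'' as compatibility of the grading on $\mathcal{R}[q,q^{-1}] \otimes_{\mathbb{Z}} \Lambda$ with the filtration on $\mathrm{FH}_q$; this is precisely the form of the statement that will be invoked afterwards when deducing that $\Psi_q$ is an isomorphism by comparing associated graded rings.
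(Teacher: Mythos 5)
Your argument is correct and is essentially the paper's own proof, which simply notes that $\Psi_q(e_r)$ is a sum of $K_{\mu,q}$ with filtration degree $|\mu| = r$ and leaves the rest (spanning of $\Lambda_d$ by products of elementary symmetric functions, multiplicativity of $\Psi_q$, and the filtered-algebra property of $\mathrm{FH}_q$) implicit. Your reading of ``respects the grading'' as compatibility of the grading on $\mathcal{R}[q,q^{-1}] \otimes_{\mathbb{Z}} \Lambda$ with the filtration on $\mathrm{FH}_q$ is also the intended one.
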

\begin{proof}
This follows from the fact that $\Psi_q(e_r)$ is a sum of $K_{\mu, q}$, each of which has filtration degree $|\mu| = r$.
\end{proof}

\begin{theorem}
The associated graded map $\mathrm{gr}(\Psi_q)$ is an isomorphism.
\end{theorem}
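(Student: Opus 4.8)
The plan is to show that $\mathrm{gr}(\Psi_q)$ is an isomorphism by comparing it with the corresponding classical map $\mathrm{gr}(\Psi)$, which is an isomorphism by Theorem \ref{thm:FH_lambda_iso}. The key observation is that the associated graded of $\mathrm{FH}_q$ (with respect to the filtration $\mathcal{F}_{i,q}$) has particularly simple structure constants: by Theorem \ref{thm:geck_rouquier_filtration}, the top-degree part of $K_{\mu,q}K_{\nu,q}$ involves only $\phi_{\mu,\nu}^\lambda(q,t)$ with $|\lambda| = |\mu|+|\nu|$, and these are independent of $t$. So $\mathrm{gr}(\mathrm{FH}_q)$ is an algebra over $\mathbb{Z}[q,q^{-1}]$, extended by scalars to $\mathcal{R}[q,q^{-1}]$; that is, $\mathrm{gr}(\mathrm{FH}_q) \cong \mathcal{R}[q,q^{-1}] \otimes_{\mathbb{Z}[q,q^{-1}]} A_q$ for the $\mathbb{Z}[q,q^{-1}]$-algebra $A_q$ spanned by the $K_{\mu,q}$ with these $t$-independent top-degree structure constants. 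The same holds in the classical case: $\mathrm{gr}(\mathrm{FH}) \cong \mathcal{R} \otimes_{\mathbb{Z}} A$, and the map $\Theta$ of Proposition \ref{prop:diagram} sends $K_{\mu,q}\mapsto K_\mu$ and hence induces, on associated graded, a ring homomorphism $A_q \to A$ which is $q\mapsto 1$ specialisation.

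The heart of the argument is then to show that $A_q$ is already isomorphic to $\mathbb{Z}[q,q^{-1}] \otimes_{\mathbb{Z}} A$, i.e.\ that the top-degree structure constants of the Geck-Rouquier basis are actually \emph{independent of $q$} (not merely of $t$). First I would establish this. One route: the leading-term structure constants can be computed in $Z(H_n(q))$ for $n$ large (bigger than $|\lambda|+l(\lambda)$ for all relevant $\lambda$), where Theorem \ref{thm:geck_rouquier_filtration}(1) guarantees no lower-degree terms interfere in the relevant graded piece; there one can use the fact that, modulo the ideal generated by lower-filtration pieces, the Geck-Rouquier elements multiply the way disjoint cycles concatenate, and the relevant combinatorics (gluing cycles of total reduced size $|\mu|+|\nu|$) is purely set-theoretic and matches the $q=1$ case exactly. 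Alternatively, and more cleanly, I would argue abstractly: $\Theta$ induces a surjection $A_q \twoheadrightarrow A$ (surjective since the $K_\mu$ are hit), and both $A_q$ and $A$ are free modules of the same graded rank (one basis element $K_{\mu,q}$, resp.\ $K_\mu$, per partition $\mu$, with $|\mu|$ the degree); a graded surjection between free modules of equal finite rank in each degree that is given by specialising $q\mapsto 1$ must come from structure constants in $\mathbb{Z}[q,q^{-1}]$ that reduce correctly — but to conclude $q$-independence I really do need the concatenation computation, so that is the step to do carefully.

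Once $A_q \cong \mathbb{Z}[q,q^{-1}] \otimes_{\mathbb{Z}} A$ is known, the rest is formal. Extending scalars, $\mathrm{gr}(\mathrm{FH}_q) \cong \mathbb{Z}[q,q^{-1}] \otimes_{\mathbb{Z}} \mathrm{gr}(\mathrm{FH})$. Likewise the source decomposes: $\mathcal{R}[q,q^{-1}] \otimes_{\mathbb{Z}} \Lambda \cong \mathbb{Z}[q,q^{-1}] \otimes_{\mathbb{Z}} (\mathcal{R} \otimes_{\mathbb{Z}} \Lambda)$. Under these identifications, $\mathrm{gr}(\Psi_q)$ is visibly $\mathrm{id}_{\mathbb{Z}[q,q^{-1}]} \otimes \mathrm{gr}(\Psi)$: indeed both $\Psi_q$ and $\Psi$ send $e_r \mapsto \sum_{\mu \vdash r} K_{(\cdot)}$, and the commuting square of Proposition \ref{prop:diagram} together with the compatibility of $\Theta$ with $\Psi_q,\Psi$ (check on the generators $e_r$, where it is immediate) shows the base-changed map is the right one. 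Since $\mathrm{gr}(\Psi)$ is an isomorphism by Theorem \ref{thm:FH_lambda_iso} and tensoring with the flat ring $\mathbb{Z}[q,q^{-1}]$ preserves isomorphisms, $\mathrm{gr}(\Psi_q)$ is an isomorphism.

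The main obstacle is the $q$-independence of the top-degree structure constants. Everything else is bookkeeping with base change and the filtration; but pinning down that the "leading term" multiplication of Geck-Rouquier elements is the naive cycle-concatenation — identical to $q=1$ — requires either an explicit handle on $\Gamma_\mu$ modulo lower filtration (using minimal-length representatives and property (3) of the Geck-Rouquier characterisation, plus Theorem \ref{thm:geck_rouquier_filtration}) or an appeal to M\'eliot's construction of the $\phi_{\mu,\nu}^\lambda(q,t)$. I expect the cleanest write-up reduces to the statement that, in the associated graded, $K_{\mu,q} K_{\nu,q} = \sum_\lambda \phi_{\mu,\nu}^\lambda(q) K_{\lambda,q}$ with $\phi_{\mu,\nu}^\lambda(q) = \phi_{\mu,\nu}^\lambda(1)$, which one verifies by applying $\theta\circ\Phi_{n,q}$ for $n \gg 0$ and noting that in that range the graded-degree-$(|\mu|+|\nu|)$ part of the product is unchanged by the specialisation $q\mapsto 1$ because no $t$-dependence (hence no cancellation with lower terms) occurs there.
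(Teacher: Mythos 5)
Your reduction to the $q=1$ case rests on the claim that the top-degree structure constants $\phi_{\mu,\nu}^{\lambda}(q,t)$ with $|\lambda|=|\mu|+|\nu|$ are independent of $q$ as well as of $t$, so that $\mathrm{gr}(\mathrm{FH}_q)\cong\mathbb{Z}[q,q^{-1}]\otimes_{\mathbb{Z}}\mathrm{gr}(\mathrm{FH})$ and $\mathrm{gr}(\Psi_q)=\mathrm{id}\otimes\mathrm{gr}(\Psi)$. That claim is false, and it is contradicted by Example \ref{ex:Hecke_example} of this paper: in $\Gamma_{(1)}^2$ the top-degree coefficients are $q+q^{-1}$ on $\Gamma_{(1,1)}$ and $q+1+q^{-1}$ on $\Gamma_{(2)}$, which Theorem \ref{thm:geck_rouquier_filtration}(2) guarantees are independent of $t$ but which genuinely depend on $q$. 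The ``cycle concatenation'' heuristic does not survive $q$-deformation: products $T_vT_w$ and the non-minimal-length tails of the $\Gamma_\mu$ (such as the $q^{-1}T_{(13)}$ term in $\Gamma_{(1)}$) feed powers of $q$ into the leading term. Concretely, in degree $2$ the matrix of $\mathrm{gr}(\Psi_q)$ with respect to $\{m_{(2)},m_{(1,1)}\}$ and $\{K_{(1,1),q},K_{(2),q}\}$ is
\[
\begin{pmatrix} q+q^{-1}-2 & 1\\ q-1+q^{-1} & 1\end{pmatrix},
\]
which is not the specialised $q=1$ matrix; it does have determinant $-1$, but that unit determinant is precisely the nontrivial fact to be proved, and it cannot be deduced from the $q=1$ statement.

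This is why the rest of your argument cannot be repaired by bookkeeping: knowing that $\mathrm{gr}(\Psi_q)$ specialises at $q=1$ to the isomorphism $\mathrm{gr}(\Psi)$ of Theorem \ref{thm:FH_lambda_iso} only tells you that each graded determinant is an element of $\mathcal{R}[q,q^{-1}]$ whose value at $q=1$ is a unit; elements such as $q-2$ or $1+(q-1)\binom{t}{2}$ satisfy this without being units, so invertibility over $\mathcal{R}[q,q^{-1}]$ does not follow. (Your ``abstract'' fallback---equal graded ranks plus surjectivity after specialisation---fails for the same reason, as you yourself note.) Genuinely $q$-dependent input is needed. The paper obtains it by identifying the degree-$k$ matrix of $\mathrm{gr}(\Psi_q)$ with the Francis--Graham matrix $M^{(k)}(n)$ for $n\geq 2k$, using the stability theorem (Theorem 3.2 of \cite{Mathas_Stability}) to see that its entries lie in $\mathbb{Z}[q,q^{-1}]$ and are independent of $t$, and then invoking Theorem 7.1 of \cite{FrancisGraham}, which asserts that $M^{(k)}(n)$ is invertible over $\mathbb{Z}[q,q^{-1}]$. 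Some such external (or newly proved) $q$-level invertibility statement is unavoidable; the $q=1$ theorem alone does not suffice.
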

\begin{proof}
We explain how this follows from the paper of Francis and Graham \cite{FrancisGraham}. We need to check that $\mathrm{gr}(\Psi_q)$ is an isomorphism for each degree $k$. The domain has a $\mathcal{R}[q,q^{-1}]$-basis consisting of monomial symmetric functions $m_\mu$ indexed by partitions $\mu$ of size $k$. The codomain has a basis consisting of $K_{\nu, q}$ indexed by partitions $\nu$ of size $k$. Let $N^{(k)}$ be the matrix expressing $\mathrm{gr}(\Psi_q)$ with respect to these two bases. Thus $N^{(k)}$ is a matrix whose entries are a priori elements of $\mathcal{R}[q,q^{-1}]$ and which obeys
\[
\Psi_q(m_{\mu}) = \sum_{\nu} N_{\nu,\mu}^{(k)} K_{\nu, q} + \cdots,
\]
where the ellipsis indicates terms of lower filtration degree. Now let us apply $\Phi_{n,q}$, which by Proposition \ref{prop:triangular_CD} gives
\[
ev_n(m_{\mu}) =  \Phi_{n,q} \left(\sum_{\nu}N_{\nu,\mu}^{(k)} K_{\nu, q} + \cdots \right) = \sum_{\nu} \left(N_{\nu,\mu}^{(k)}|_{t=n}\right) \Gamma_{\nu} + \cdots,
\]
where the notation $|_{t=n}$ indicates evaluation of the variable $t$ of $\mathcal{R}[q,q^{-1}]$ at $n$.
\newline \newline \noindent
Note that $\left(N_{\nu,\mu}^{(k)}|_{t=n}\right)$ is a matrix with entries $\mathbb{Z}[q,q^{-1}]$. Let us assume $n \geq 2k$, so that every partition $\nu$ of size $k$ obeys $|\nu| + l(\nu) \leq n$, making these $\Gamma_{\nu}$ linearly independent. In the introduction of \cite{FrancisGraham}, matrices $M^{(k)}(n)$ are defined which are identical to $\left(N_{\nu,\mu}^{(k)}|_{t=n}\right)$ for $n \geq 2k$. It is then explained that these matrices $M^{(k)}(n)$ are actually independent of $n$, provided that $n \geq 2k$; this result follows from Theorem 3.2 of \cite{Mathas_Stability}. In particular, for any integer $m$, the coefficient of $q^m$ in $N_{\nu,\mu}^{(k)} \in \mathcal{R}[q,q^{-1}]$ is given by a polynomial in the variable $t$, whose value at each $n \geq 2k$ is the same. Since this infinite set is Zariski dense, we conclude that actually the coefficient of $q^m$ in $N_{\nu,\mu}^{(k)}$ is independent of $t$, and hence that $N_{\nu,\mu}^{(k)} \in \mathbb{Z}[q,q^{-1}]$.
\newline \newline \noindent
Finally, Theorem 7.1 of \cite{FrancisGraham} asserts that $M^{(k)}(n)$ is invertible over $\mathbb{Z}[q,q^{-1}]$, where $n \geq 2k$. We conclude that the restriction of $\mathrm{gr}(\Psi_q)$ to degree $k$ is described by an invertible matrix, and therefore that the restriction of $\mathrm{gr}(\Psi_q)$ to degree $k$ is an isomorphism. Hence $\mathrm{gr}(\Psi_q)$ is an isomorphism.
\end{proof}

\begin{remark}
When $n < 2k$, the matrix $M^{(k)}(n)$ defined in \cite{FrancisGraham} is obtained by omitting the rows of $N^{(k)} = M^{(k)}(2k)$ corresponding to partitions $\nu$ with $|\nu| + l(\nu) > n$. The omitted entries are precisely the coefficients of those $\Gamma_{\nu}$ that are zero in $H_n(q)$.
\end{remark}

\begin{theorem}
We have that $\Psi_q$ is an isomorphism. In particular, $\mathrm{FH}_q \cong \mathcal{R}[q,q^{-1}] \otimes_{\mathbb{Z}} \Lambda$.
\end{theorem}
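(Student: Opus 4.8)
\noindent
The plan is to derive the statement from the preceding theorem (that $\mathrm{gr}(\Psi_q)$ is an isomorphism) via the standard principle that a filtration-preserving homomorphism which induces an isomorphism on associated graded objects, between modules exhausted by their filtrations, is itself an isomorphism. First I would record that $\Psi_q$ is genuinely filtration-preserving. Filter $\mathcal{R}[q,q^{-1}] \otimes_{\mathbb{Z}} \Lambda$ by letting $\mathcal{G}_i$ be the $\mathcal{R}[q,q^{-1}]$-span of the monomial symmetric functions $m_\mu$ with $|\mu| \leq i$; equivalently $\mathcal{G}_i$ is spanned by the products $e_{r_1} \cdots e_{r_\ell}$ with $r_1 + \cdots + r_\ell \leq i$. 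Since $\Psi_q$ is an algebra homomorphism with $\Psi_q(e_r) \in \mathcal{F}_{r,q}$, and $\mathrm{FH}_q$ is a filtered algebra by Theorem \ref{thm:geck_rouquier_filtration}(1), we get $\Psi_q(e_{r_1} \cdots e_{r_\ell}) \in \mathcal{F}_{r_1, q} \cdots \mathcal{F}_{r_\ell, q} \subseteq \mathcal{F}_{r_1 + \cdots + r_\ell, q}$, hence $\Psi_q(\mathcal{G}_i) \subseteq \mathcal{F}_{i,q}$.

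\noindent
Next I would show, by induction on $i$, that the restriction $\Psi_q \colon \mathcal{G}_i \to \mathcal{F}_{i,q}$ is an isomorphism of $\mathcal{R}[q,q^{-1}]$-modules. For $i = 0$ both modules are free of rank one (with bases $\{1\}$ and $\{K_{\varnothing, q}\}$) and $\Psi_q$ acts as the identity. For the inductive step, consider the map of short exact sequences
\[
\begin{tikzcd}
0 \arrow[r] & \mathcal{G}_{i-1} \arrow[r] \arrow[d, "\Psi_q"] & \mathcal{G}_i \arrow[r] \arrow[d, "\Psi_q"] & \mathcal{G}_i / \mathcal{G}_{i-1} \arrow[r] \arrow[d, "\mathrm{gr}_i(\Psi_q)"] & 0 \\
0 \arrow[r] & \mathcal{F}_{i-1,q} \arrow[r] & \mathcal{F}_{i,q} \arrow[r] & \mathcal{F}_{i,q} / \mathcal{F}_{i-1,q} \arrow[r] & 0
\end{tikzcd}
\]
in which the left vertical arrow is an isomorphism by the inductive hypothesis and the right vertical arrow is the degree-$i$ component of $\mathrm{gr}(\Psi_q)$, hence an isomorphism by the preceding theorem. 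The five lemma then forces the middle arrow to be an isomorphism.

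\noindent
Finally, since $\mathcal{R}[q,q^{-1}] \otimes_{\mathbb{Z}} \Lambda = \bigcup_{i \geq 0} \mathcal{G}_i$ and $\mathrm{FH}_q = \bigcup_{i \geq 0} \mathcal{F}_{i,q}$, and the isomorphisms $\Psi_q \colon \mathcal{G}_i \to \mathcal{F}_{i,q}$ are compatible with the inclusions, $\Psi_q$ is an isomorphism, and in particular $\mathrm{FH}_q \cong \mathcal{R}[q,q^{-1}] \otimes_{\mathbb{Z}} \Lambda$. I do not expect a serious obstacle in this last argument: all the real content lies in the preceding theorem, whose proof rested on the stability results of Francis--Graham (and, through them, of Mathas). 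The only mild point of care is that the base ring is $\mathcal{R}[q,q^{-1}]$ rather than a field, so one invokes the five lemma for modules rather than a dimension count; this is harmless because each $\mathcal{G}_i$ and each $\mathcal{F}_{i,q}$ is a finitely generated free $\mathcal{R}[q,q^{-1}]$-module and the relevant sequences split.
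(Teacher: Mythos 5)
Your proposal is correct and follows essentially the same route as the paper: the paper's proof simply asserts that injectivity and surjectivity of $\Psi_q$ follow from the corresponding properties of $\mathrm{gr}(\Psi_q)$, and your argument (filtration-preservation, induction on filtration degree via the short five lemma, then passing to the exhaustive union) is exactly the standard unwinding of that assertion, with no new ideas needed beyond the preceding theorem.
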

\begin{proof}
Both injectivity and surjectivity follow from the corresponding properties of the associated graded version of $\Psi_q$.
\end{proof}

\noindent
Noting that $\Lambda$ is generated by the elementary symmetric functions $e_r$ we immediately obtain a proof of a Conjecture of Francis and Wang.
\begin{corollary}[Conjecture 3.2  \cite{FrancisWang}]
The algebra $\mathrm{FH}_q$ is generated (over $\mathcal{R}[q,q^{-1}]$) by the elements
\[
\Psi_q(e_r) = \sum_{|\lambda| = r} K_{\lambda,q}.
\]
\end{corollary}

\section{Applications to Representation Theory} \label{section:applications}
\noindent
In this section we give a formula for the scalars by which the Geck-Rouquier basis acts on irreducible representations of $H_n(q)$ in terms of contents of partitions. Along the way we explain how the theory we have developed can be used to determine the blocks of Iwahori-Hecke algebras. This parallels the original work of Farahat and Higman, whose motivation was to give a simplified proof of the Nakayama Conjecture about $p$-blocks of symmetric group representations.
\newline \newline \noindent
First we recall several concepts related to partitions necessary to state the results: the $e$-core of a partition, and the contents of a partition.

\begin{definition}
If $\lambda$ is a partition, a \emph{border strip} of $\lambda$ of size $e$ is a subset $D$ of the boxes in the diagram of $\lambda$ obeying the following conditions:
\begin{itemize}
\item $D$ consists of $e$ boxes,
\item $\lambda \backslash D$ is the diagram of a partition,
\item $D$ is connected (boxes sharing an edge are considered adjacent, boxes sharing only a vertex are not),
\item $D$ does not contain a $2 \times 2$ square of boxes.
\end{itemize}
The \emph{$e$-core} of $\lambda$ is the partition obtained by successively removing border strips of size $e$ until it is no longer possible to do so.
\end{definition}
\noindent
It is well known that any choice of iteratively removing border strips of size $e$ results in the same $e$-core, so the $e$-core of $\lambda$ is well defined (see \cite{Macdonald} Chapter 1.1, Example 8).
\begin{example}
There are two ways of removing border strip of size 4 from the partition $(5,3,2)$, shown below. The black squares indicate the first border strip to be removed, while the grey squares indicate the second border strip.

\begin{figure}[H]
\centering
\ytableausetup{nosmalltableaux}
\begin{ytableau}
*(white) & *(gray) & *(black) & *(black) & *(black)\\
*(white) & *(gray) & *(black) \\
*(gray) & *(gray)
\end{ytableau},
\hspace{20mm}
\begin{ytableau}
*(white) & *(gray) & *(gray) & *(gray) & *(gray)\\
*(white) & *(black) & *(black) \\
*(black) & *(black)
\end{ytableau}
\end{figure}
\noindent
In both cases, the remaining white squares form the diagram of the partition $(1,1)$ which is the $4$-core of $(5,3,2)$.
\end{example}

\begin{definition}
Suppose that $\lambda$ is a partition of size $n$. A \emph{standard Young tableau} of shape $\lambda$ is a bijective labelling of the boxes of the Young diagram of $\lambda$ with the numbers $1,2,\ldots,n$ such that the labels are increasing in each row and column (we use English notation for partitions). The \emph{content} of a box in row $i$ and column $j$ is $j-i$. If $T$ is a standard Young tableau, we write $c_T(m)$ for the content of the box containing the label $m$. 
\end{definition}

\begin{example}
Here are two standard Young tableaux of shape $(4,2,1)$:

\begin{figure}[H]
\centering
\ytableausetup{nosmalltableaux}
\ytableausetup{centertableaux}

$T_1$ = \begin{ytableau}
1 & 2 & 6 & 8\\
3 & 4 & 7 \\
5
\end{ytableau},
\hspace{20mm}
$T_2$ = \begin{ytableau}
1 & 4 & 6 & 7 \\
2 & 5 & 8 \\
3
\end{ytableau}
\end{figure}
\noindent
So we have $c_{T_1}(5) = 1-3 = -2$, while $c_{T_2}(5) = 2-2 = 0$. In both cases the multiset of the content of all boxes is $\{-2,-1,0,0,1,1,2,3\}$, because the contents are determined by the boxes in the diagram, and the labelling data of the tableau only prescribes the order.
\end{example}
\begin{definition}
If $m$ is an integer, we define the \emph{$q$-number} $[m]_q = 1 + q + \cdots + q^{m-1}$ if $m \geq 0$, and $[m]_q = -q^{-1} - q^{-2} - \cdots - q^{m}$ if $m < 0$. For $q \neq 1$, we have $[m]_q = \frac{q^m-1}{q-1}$.
\end{definition}
\begin{definition}
For a partition $\lambda$, let us write $\mathrm{cont}(\lambda)$ for the multiset of contents of boxes in the diagram of $\lambda$. We also write $\mathrm{cont}_q(\lambda)$ for the multiset of $[c]_q$ as $c$ ranges across $\mathrm{cont}(\lambda)$. We call $\mathrm{cont}_q(\lambda)$ the \emph{$q$-contents} of $\lambda$.
\end{definition}
\noindent
Recall that the characteristic zero irreducible representations of the symmetric group $S_n$ are the \emph{Specht modules} $S^\lambda$, indexed by partitions $\lambda$ of size $n$. The Specht modules are defined over $\mathbb{Z}$, and so one may base change to an arbitrary field $\mathbb{F}$. Hence we may view the Specht modules as modules for the algebra $\mathbb{F}S_n$, which may not be semisimple. This raises the question of when two Specht modules are in the same block (as representations of $\mathbb{F}S_n$). An answer is given by the Nakayama Conjecture (originally proved by Brauer and Robinson \cite{BrauerRobinson}), which asserts the following. Suppose that the characteristic of $\mathbb{F}$ is $p$. Then two Specht modules $S^\lambda, S^\mu$ are in the same block of $\mathbb{F}S_n$ if and only $\lambda$ and $\mu$ have the same $p$-core. As we will see, this result generalises neatly to $H_n(q)$, and we can give a new proof of this fact with using the $q$-Farahat-Higman algebra (see Corollary \ref{cor:Nakayama_conj} below). We now recall some facts about representations of $H_n(q)$; for details see \cite{Mathas}.
\newline \newline \noindent
For each partition of $\lambda$ of $n$, there is a $H_n(q)$-module $S^{\lambda}$ called the \emph{Specht module} indexed by $\lambda$. (If $q=1$ so that $H_n(q) = \mathbb{Z}S_n$, these Specht modules coincide with the ones for the symmetric group $S_n$.) Each Specht module for $H_n(q)$ is free as a $\mathbb{Z}[q,q^{-1}]$-module. Let $\mathbb{F}$ be a field with a distinguished nonzero element $\bar{q}$. Then $\mathbb{F}$ is a $\mathbb{Z}[q,q^{-1}]$-algebra via the homomorphism $\varphi: \mathbb{Z}[q,q^{-1}] \to \mathbb{F}$ defined by $q \mapsto \bar{q}$. We write 
\[
H_{n,{\mathbb{F}}}(\bar{q}) = \mathbb{F} \otimes_{\mathbb{Z}[q,q^{-1}]} H_n(q), \hspace{5mm} S_{\mathbb{F}}^{\lambda} = \mathbb{F} \otimes_{\mathbb{Z}[q,q^{-1}]} S^{\lambda}.
\]
Then $H_{n,{\mathbb{F}}}(\bar{q})$ is an $\mathbb{F}$-algebra, and $S_{\mathbb{F}}^{\lambda}$ is a module for this algebra.

\begin{definition}
If $\mathbb{F}$ is a field with a distinguished element $\bar{q}$, we define the \emph{quantum characteristic} to be the least positive integer $e$ such that $[e]_{\bar{q}} = 0$ as an element of $\mathbb{F}$:
\[
e = \min \left\{ m \in \mathbb{Z}_{>0} \mid [m]_{\bar{q}} = 0 \right\},
\]
or $e = \infty$ if there is no $m \in \mathbb{Z}_{>0}$ with $[m]_{\bar{q}} = 0$.
\end{definition}
\noindent
So for example, if $\bar{q}=1$, then $[m]_{\bar{q}} = m$, and so $e = p$ if $\mathrm{char}(\mathbb{F}) = p > 0$, while $e = \infty$ if $\mathrm{char}(\mathbb{F}) = 0$. The following lemma is then a trivial calculation.

\begin{lemma} \label{lemma:q_number_periodicity}
Let $m_1, m_2 \in \mathbb{Z}$ and $e$ the quantum characteristic. If $e = \infty$, then $[m_1]_{\bar{q}} = [m_2]_{\bar{q}}$ if and only if $m_1 = m_2$. Otherwise $[m_1]_{\bar{q}} = [m_2]_{\bar{q}}$ if and only if $m_1$ is congruent to $m_2$ modulo $e$.
\end{lemma}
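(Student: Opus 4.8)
The plan is to reduce everything to the behaviour of powers of $\bar{q}$, splitting into the cases $\bar{q} = 1$ and $\bar{q} \neq 1$ according to which formula for $[m]_{\bar{q}}$ applies.

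First I would dispose of the case $\bar{q} = 1$. Here $[m]_{\bar{q}} = m \cdot 1_{\mathbb{F}}$ for every $m \in \mathbb{Z}$ (immediate from the definition, including the sign convention for $m < 0$), and the quantum characteristic $e$ is by definition the least positive integer annihilated in $\mathbb{F}$, i.e.\ $e = \mathrm{char}(\mathbb{F})$ when this is positive and $e = \infty$ otherwise. Thus $[m_1]_{\bar{q}} = [m_2]_{\bar{q}}$ is equivalent to $(m_1 - m_2)\cdot 1_{\mathbb{F}} = 0$, which holds exactly when $m_1 \equiv m_2 \pmod{e}$ (with the convention that congruence modulo $\infty$ means equality).

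For $\bar{q} \neq 1$, the element $\bar{q} - 1$ is a unit in $\mathbb{F}$, so $[m]_{\bar{q}} = (\bar{q}^m - 1)/(\bar{q} - 1)$ holds for all $m \in \mathbb{Z}$ (for $m < 0$ one checks directly that the negative-$m$ formula agrees). Hence $[m_1]_{\bar{q}} = [m_2]_{\bar{q}}$ if and only if $\bar{q}^{m_1} = \bar{q}^{m_2}$, i.e.\ $\bar{q}^{m_1 - m_2} = 1$. Letting $d$ denote the multiplicative order of $\bar{q}$ in $\mathbb{F}^\times$ (with $d = \infty$ if $\bar{q}$ has infinite order), this says precisely $d \mid m_1 - m_2$ (equality of $m_1, m_2$ when $d = \infty$). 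It then remains to identify $d$ with $e$: since $\bar{q} - 1$ is a unit, $[e]_{\bar{q}} = 0$ is equivalent to $\bar{q}^e = 1$, so $e$, the least positive integer with $[e]_{\bar{q}} = 0$, is exactly the least positive integer with $\bar{q}^e = 1$, namely $d$; and if no such integer exists then $e = \infty = d$. Substituting $e$ for $d$ gives the claim.

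This really is a routine calculation, so I do not anticipate any genuine obstacle; the only points requiring a little care are the negative-$m$ cases of the definition of $[m]_{\bar{q}}$ and the bookkeeping identifying the multiplicative order of $\bar{q}$ with the quantum characteristic when $\bar{q} \neq 1$ (and, in the degenerate case $\bar{q} = 1$, remembering that then $e$ is the additive characteristic of $\mathbb{F}$, not the order of $\bar{q}$).
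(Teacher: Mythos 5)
Your proof is correct, and the paper does not actually spell out an argument here---it dismisses the lemma as ``a trivial calculation'' following its remark that $\bar{q}=1$ gives $[m]_{\bar{q}}=m$ and $e=\mathrm{char}(\mathbb{F})$. Your case split ($\bar{q}=1$ reducing to the additive characteristic, $\bar{q}\neq 1$ reducing via $[m]_{\bar{q}}=(\bar{q}^m-1)/(\bar{q}-1)$ to the multiplicative order of $\bar{q}$, which you correctly identify with $e$) is precisely the routine verification the paper has in mind, including the check that the formula covers negative $m$.
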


\noindent
In Corollary 3.44 of \cite{Mathas}, it is explained that if $e > n$, then $H_{n, \mathbb{F}}(\bar{q})$ is semisimple and the Specht modules $S_{\mathbb{F}}^{\lambda}$ are precisely the irreducible representations of the Iwhaori-Hecke algebra. If $e \leq n$, $H_{m, \mathbb{F}}(\bar{q})$ not semisimple, and the irreducible representations are given by certain quotients of a certain subset of the $S_{\mathbb{F}}^{\lambda}$. The question of which Specht modules lie in the same block amounts to identifying when two Specht modules have the same central character. We now explain how the JM evaluation allows us to extract this information.

\begin{proposition}[\cite{Mathas} Proposition 3.37]
There is a basis $v_T$ of $S_{\mathbb{F}}^\lambda$ indexed by standard Young tableaux $T$ of shape $\lambda$, such that the action of each $L_i$ is lower triangular. Moreover, we have
\[
L_i v_T = [c_T(i)]_{\bar{q}} v_T + \cdots,
\]
where the ellipsis indicates higher-order terms. We refer to this basis as the \emph{Gelfand-Zetlin} (``GZ'') basis.
\end{proposition}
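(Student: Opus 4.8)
This is a classical fact; the plan is to realise the Gelfand--Zetlin basis as (a reordering of) the \emph{Murphy basis} of the Specht module coming from the cellular structure of $H_n(q)$. Recall that for each standard $\lambda$-tableau $\mathfrak{t}$ there is a Murphy element $m_{\mathfrak{t}}\in S^\lambda$, and that $\{\,m_{\mathfrak{t}}\mid \mathfrak{t}\text{ standard of shape }\lambda\,\}$ is a $\mathbb{Z}[q,q^{-1}]$-basis of $S^\lambda$. I would work over $R=\mathbb{Z}[q,q^{-1}]$ throughout and base change along $q\mapsto\bar{q}$ only at the very end, so that no semisimplicity is needed. Fix a total order on standard $\lambda$-tableaux refining the dominance order $\trianglerighteq$, where $\mathfrak{t}\trianglerighteq\mathfrak{s}$ means that the shape of $\mathfrak{t}|_{\{1,\dots,k\}}$ dominates that of $\mathfrak{s}|_{\{1,\dots,k\}}$ for every $k$; then index the basis in the order that makes the action below lower triangular.

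The heart of the argument is the congruence
\[
L_i\, m_{\mathfrak{t}}\;\equiv\;[\,c_{\mathfrak{t}}(i)\,]_q\, m_{\mathfrak{t}}\pmod{\ \mathrm{span}_R\{\,m_{\mathfrak{s}}:\mathfrak{s}\vartriangleright\mathfrak{t}\,\}}
\]
for $1\le i\le n$, which I would prove by induction on $i$. The case $i=1$ is trivial: $L_1=0$, while the box of $\mathfrak{t}$ labelled $1$ is $(1,1)$, so $c_{\mathfrak{t}}(1)=0$ and $[0]_q=0$. For the inductive step one uses the standard recursion expressing $L_i$ through $L_{i-1}$ and conjugation by the generator $T_{i-1}$ (see \cite{Mathas}), together with the action of $T_{i-1}$ on Murphy basis elements: the straightening (Garnir) relations in the cellular basis show that $T_{i-1}\,m_{\mathfrak{t}}$ is a combination of $m_{\mathfrak{t}}$, of the Murphy element attached to $\mathfrak{t}\cdot s_{i-1}$ (when that tableau is standard), and of $m_{\mathfrak{s}}$ with $\mathfrak{s}$ strictly more dominant than $\mathfrak{t}$. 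Feeding in the inductive hypothesis for $L_{i-1}$ and simplifying with the elementary identity relating $[c_{\mathfrak{t}}(i-1)]_q$ to $[c_{\mathfrak{t}}(i)]_q$ across the transposition $s_{i-1}$ yields the claim. Base changing to $\mathbb{F}$ and setting $v_{\mathfrak{t}}:=m_{\mathfrak{t}}\otimes 1$ then gives a basis of $S^\lambda_{\mathbb{F}}$ on which every $L_i$ acts lower triangularly with diagonal entry $[c_{\mathfrak{t}}(i)]_{\bar{q}}$, which is the proposition.

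The main obstacle is precisely the triangularity in the inductive step: it requires honest control of how $T_{i-1}$ acts on the cellular basis, i.e.\ of the Garnir relations, and one must verify that every correction term genuinely lies in the span of strictly more dominant tableaux. In the semisimple range $e>n$ there is a shortcut: the commuting family $L_1,\dots,L_n$ is then simultaneously diagonalisable, producing Young's seminormal basis $f_{\mathfrak{t}}$ with $L_i f_{\mathfrak{t}}=[c_{\mathfrak{t}}(i)]_{\bar{q}}f_{\mathfrak{t}}$ \emph{exactly}, and one need only check that the transition matrix between $\{f_{\mathfrak{t}}\}$ and $\{m_{\mathfrak{t}}\}$ is unitriangular for $\trianglerighteq$; but for general $e$ no such diagonalisation exists, so the integral Murphy-basis argument above is the route that survives.
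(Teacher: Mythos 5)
The paper does not prove this proposition at all --- it is imported verbatim from Mathas (Proposition 3.37, resting on Theorem 3.32 there) --- and your Murphy-basis induction is essentially the argument carried out in that reference, so your approach is the right one and is correct in outline. The bookkeeping you yourself flag as the main obstacle (the recursion $L_i = q^{-1}(T_{i-1}L_{i-1}T_{i-1}+T_{i-1})$ produces cross-terms $m_{\mathfrak{t}s_{i-1}}$ that need not dominate $\mathfrak{t}$, and one must check they recombine into the diagonal term plus strictly more dominant ones) is exactly the case analysis Mathas performs, so nothing further is needed beyond citing or reproducing that computation.
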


\begin{proposition} \label{prop:scalar_action}
Central elements of $H_{n,\mathbb{F}}(q)$ act on Specht modules by scalar multiplication.
\end{proposition}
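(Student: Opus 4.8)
The plan is to deduce the statement from the semisimple case by a base-change argument, using that Specht modules are defined integrally. Over the fraction field $\mathbb{Q}(q)$, the algebra $H_n(q) \otimes_{\mathbb{Z}[q,q^{-1}]} \mathbb{Q}(q)$ is split semisimple, and the modules $S^\lambda \otimes_{\mathbb{Z}[q,q^{-1}]} \mathbb{Q}(q)$ are its pairwise non-isomorphic absolutely irreducible representations; this is classical (see \cite{Mathas}). Consequently, by Schur's lemma, any element $z \in Z(H_n(q))$ acts on $S^\lambda \otimes_{\mathbb{Z}[q,q^{-1}]} \mathbb{Q}(q)$ as multiplication by a scalar $\omega_\lambda(z) \in \mathbb{Q}(q)$.

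The next step is an integrality observation. Since $S^\lambda$ is free of positive rank as a $\mathbb{Z}[q,q^{-1}]$-module and is stable under the action of $z$, I would fix a basis vector $v$ and note that $z v = \omega_\lambda(z)\, v$ holds already inside $S^\lambda \otimes_{\mathbb{Z}[q,q^{-1}]} \mathbb{Q}(q)$; but $z v \in S^\lambda$, and the $v$-coordinate of $z v$ is exactly $\omega_\lambda(z)$, forcing $\omega_\lambda(z) \in \mathbb{Z}[q,q^{-1}]$. Thus $z - \omega_\lambda(z)\cdot 1$ annihilates $S^\lambda$ over $\mathbb{Z}[q,q^{-1}]$. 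Applying the ring homomorphism $\varphi \colon \mathbb{Z}[q,q^{-1}] \to \mathbb{F}$, $q \mapsto \bar q$, we conclude that the image of $z$ in $H_{n,\mathbb{F}}(\bar q)$ acts on $S_{\mathbb{F}}^\lambda = \mathbb{F}\otimes_{\mathbb{Z}[q,q^{-1}]} S^\lambda$ as multiplication by the scalar $\varphi(\omega_\lambda(z)) \in \mathbb{F}$.

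Finally, to reach an arbitrary central element of $H_{n,\mathbb{F}}(\bar q)$ — not merely those coming from $Z(H_n(q))$ — I would invoke the fact that the natural map $Z(H_n(q))\otimes_{\mathbb{Z}[q,q^{-1}]}\mathbb{F} \to Z(H_{n,\mathbb{F}}(\bar q))$ is an isomorphism; equivalently, the (specialised) Geck-Rouquier elements $\Gamma_\mu$ span $Z(H_{n,\mathbb{F}}(\bar q))$. This holds because $H_n(q)$ is a symmetric algebra whose cocentre $H_n(q)/[H_n(q),H_n(q)]$ is $\mathbb{Z}[q,q^{-1}]$-free of rank equal to the number of partitions of $n$, so that both cocentre and centre commute with the specialisation $\varphi$, while the $\Gamma_\mu$ remain linearly independent after specialising by their defining leading-term property. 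Granting this, every central element of $H_{n,\mathbb{F}}(\bar q)$ is an $\mathbb{F}$-linear combination of images of the $\Gamma_\mu$, each of which acts on $S_{\mathbb F}^\lambda$ by a scalar, and hence acts by a scalar itself.

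I expect the only genuinely non-formal point to be this last one — checking (or citing) that the centre of the Iwahori-Hecke algebra behaves well under specialisation, so that no ``extra'' central elements appear in $H_{n,\mathbb{F}}(\bar q)$ beyond the $\mathbb{F}$-span of the specialised $\Gamma_\mu$. Everything else reduces to Schur's lemma over the splitting field $\mathbb{Q}(q)$ together with the trivial remark that a scalar in $\mathbb{Q}(q)$ which carries a basis vector of the lattice $S^\lambda$ back into $S^\lambda$ must already lie in $\mathbb{Z}[q,q^{-1}]$. (As a bonus, combining this proposition with the Gelfand-Zetlin basis pins down the scalar $\omega_\lambda(z)$ explicitly, which is the point of the content formulae to follow.)
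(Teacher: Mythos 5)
Your proof is correct and takes essentially the same route as the paper: Schur's lemma over the splitting field $\mathbb{Q}(q)$, the integrality of the resulting scalar on the free $\mathbb{Z}[q,q^{-1}]$-lattice $S^\lambda$, and base change along $q \mapsto \bar q$. The only place you go beyond the paper is the last step identifying $Z(H_{n,\mathbb{F}}(\bar q))$ with the specialisation of $Z(H_n(q))$ --- the paper leaves this implicit in the phrase ``by base change'' (and only ever applies the proposition to elements of the form $ev_n(f)$, which do come from the integral centre), and the fact you need is exactly the Geck--Rouquier theorem, which holds over an arbitrary specialisation, so invoking it is a legitimate and slightly more complete way to cover the statement as literally phrased.
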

\begin{proof}
The case for an arbitrary field $\mathbb{F}$ follows from the case of $\mathbb{Z}[q,q^{-1}]$ by base change. We may prove that case by embedding $\mathbb{Z}[q,q^{-1}]$ in its field of fractions $\mathbb{Q}(q)$, where the distinguished element $\bar{q}$ is the generating variable $q$. In that case, the quantum characteristic is $e = \infty$, so the Specht modules are irreducible, and the result follows from Schur's Lemma.
\end{proof}

\begin{theorem} \label{thm:Hecke_content_evaluation}
Suppose that $f \in \mathcal{R}[q,q^{-1}] \otimes_{\mathbb{Z}} \Lambda$, so that $ev_n(f)$ is a central element of $H_n(q)$. The scalar by which $ev_n(f)$ acts on the Specht module $S^\lambda$ is equal to $f$ evaluated at the $\bar{q}$-contents of $\lambda$ with the integer-valued polynomial variable $t$ set to $n$.
\end{theorem}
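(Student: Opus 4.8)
The plan is to reduce the statement to the special case $f = e_r$ via the multiplicativity of all the maps involved, and then use the explicit description of the $L_i$-action on the Gelfand--Zetlin basis. First I would observe that $ev_n(f)$ acts on $S^\lambda$ by a scalar (Proposition \ref{prop:scalar_action}), so it suffices to compute that scalar, and that this scalar is a ring homomorphism $\mathcal{R}[q,q^{-1}]\otimes_{\mathbb{Z}}\Lambda \to \mathbb{F}$ in $f$: indeed $f \mapsto ev_n(f) \mapsto (\text{scalar on } S^\lambda_{\mathbb{F}})$ is a composite of ring homomorphisms (the first by definition, the second because acting on a fixed module by scalars is a homomorphism from the centre to $\mathbb{F}$, precomposed with base change along $\varphi$). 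On the other side, ``evaluate $f$ at the $\bar q$-contents of $\lambda$ with $t = n$'' is visibly a ring homomorphism $\mathcal{R}[q,q^{-1}]\otimes_{\mathbb{Z}}\Lambda \to \mathbb{F}$ as well, since evaluating a symmetric function at a fixed multiset of scalars is a ring homomorphism and evaluating an integer-valued polynomial at $n$ is too. Hence it is enough to check the two homomorphisms agree on a generating set, and since $\mathcal{R}[q,q^{-1}]\otimes_{\mathbb{Z}}\Lambda$ is generated over $\mathbb{Z}[q,q^{-1}]$ by $t$ and the $e_r$, I only need to handle $f = t$ (trivial: both sides give $n$) and $f = e_r$.

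For $f = e_r$, I would compute the scalar by which $e_r(L_1,\dots,L_n)$ acts on $S^\lambda_{\mathbb{F}}$ using the GZ basis from Proposition \ref{prop:scalar_action}'s predecessor. Fix a standard Young tableau $T$ of shape $\lambda$; since each $L_i$ acts on $v_T$ by $[c_T(i)]_{\bar q} v_T$ plus higher-order terms, and the $L_i$ commute, the symmetric polynomial $e_r(L_1,\dots,L_n)$ acts on $v_T$ by $e_r\big([c_T(1)]_{\bar q}, \dots, [c_T(n)]_{\bar q}\big) v_T$ plus higher-order terms (one should be slightly careful about what ``higher-order'' means — it refers to a triangular order on tableaux under which the diagonal terms multiply correctly; this is exactly how one argues that symmetric polynomials in JM elements are simultaneously triangularisable). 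Because the multiset $\{[c_T(1)]_{\bar q},\dots,[c_T(n)]_{\bar q}\}$ equals $\mathrm{cont}_{\bar q}(\lambda)$ independently of $T$ (the contents depend only on the diagram, not the filling), the diagonal entry is the same for every $T$, namely $e_r(\mathrm{cont}_{\bar q}(\lambda))$. Since $e_r(L_1,\dots,L_n)$ is central it acts by a scalar, and a triangular matrix with all diagonal entries equal to $e_r(\mathrm{cont}_{\bar q}(\lambda))$ which is moreover scalar must be that scalar. This matches ``$e_r$ evaluated at the $\bar q$-contents of $\lambda$'', which is what we wanted.

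There is one subtlety I would want to address carefully rather than wave at: the passage from ``$L_i v_T = [c_T(i)]_{\bar q} v_T + (\text{higher})$'' to ``$e_r(L_1,\dots,L_n) v_T = e_r(\text{contents}) v_T + (\text{higher})$'' requires knowing that the $L_i$ are \emph{simultaneously} upper/lower triangular with respect to a single ordering of the $v_T$, so that products of the $L_i$ remain triangular with the expected diagonal. This is standard in the theory (it is how one sees that $S^\lambda$ has a GZ basis at all over $\mathbb{Z}[q,q^{-1}]$), and I would cite the relevant statement from \cite{Mathas} (the seminormal/GZ basis material in Chapter 3) rather than reprove it; the only thing being used is that the diagonal part of a product of triangular matrices is the product of the diagonal parts. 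I expect this bookkeeping about the triangular order — making sure the ``$+\cdots$'' really is lower-order in a sense compatible with taking products — to be the main thing to get right; everything else is a formal reduction to generators plus the observation that contents are filling-independent. An alternative that sidesteps the triangularity entirely: prove it over $\mathbb{Q}(q)$ first, where $H_n$ is semisimple and $S^\lambda$ is irreducible with an honest eigenbasis for the commuting $L_i$ (eigenvalues exactly the $q$-contents, by the seminormal form), so the scalar is literally $e_r$ of the eigenvalues; then note both sides of the desired identity are given by elements of $\mathcal{R}[q,q^{-1}]$ (for the left side, $ev_n(f)$ acts by scalars compatibly under base change, and over $\mathbb{Z}[q,q^{-1}]$ the scalar lies in $\mathbb{Z}[q,q^{-1}]$ since $S^\lambda$ is free of finite rank; the polynomiality in $t=n$ is automatic as $f$ is a fixed element), so agreement over the fraction field forces agreement over $\mathbb{Z}[q,q^{-1}]$ and hence, after applying $\varphi$, over $\mathbb{F}$. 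I would likely present this second route as the clean proof and mention the first as the conceptual reason.
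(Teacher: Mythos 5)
Your main route is, at its core, the paper's own proof: the GZ basis makes each $L_i$ triangular with diagonal entry $[c_T(i)]_{\bar q}$, hence $ev_n(f)$ is triangular with diagonal $f(\mathrm{cont}_{\bar q}(\lambda))|_{t=n}$, and Proposition \ref{prop:scalar_action} forces the scalar to equal that diagonal value. Note, though, that this triangularity argument works uniformly in $f$, which is exactly how the paper runs it, so your preliminary reduction to generators is unnecessary --- and it is also where your one real misstep lies. The claim that $\mathcal{R}[q,q^{-1}]\otimes_{\mathbb{Z}}\Lambda$ is generated over $\mathbb{Z}[q,q^{-1}]$ by $t$ and the $e_r$ is false: $\mathcal{R}$ contains $\binom{t}{2}$ and the higher binomial coefficients, which are not in $\mathbb{Z}[t]$, and two $\mathbb{Z}[q,q^{-1}]$-algebra homomorphisms to $\mathbb{F}$ can agree on $t$ without agreeing on $\mathcal{R}$ (in characteristic $p$, evaluation of integer-valued polynomials at $p$ and at $2p$ agree on $t$ but differ on $\binom{t}{p}$). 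The repair is immediate: both of your maps send $a(q,t)\otimes 1$ to $\varphi(a(q,n))$ by their very definitions, i.e.\ they are algebra maps over $\mathcal{R}[q,q^{-1}]$ with the same structure map, so checking the $e_r$ alone suffices; or simply drop the reduction altogether.

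Your alternative route --- prove the identity over $\mathbb{Q}(q)$, where the seminormal basis is an honest simultaneous eigenbasis with eigenvalues the $q$-contents, note that the scalar on the free $\mathbb{Z}[q,q^{-1}]$-lattice $S^\lambda$ lies in $\mathbb{Z}[q,q^{-1}]$ and specialises along $\varphi$, and compare in the fraction field --- is also correct, and it is in the same spirit as the paper's proof of Proposition \ref{prop:scalar_action} itself. What it buys is avoiding any triangularity statement over a non-semisimple specialisation, at the cost of the (routine) bookkeeping that both sides of the identity are integral and compatible with base change.
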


\begin{proof}
Consider the action of $ev_n(f)$ on a GZ basis vector $v_T$. We have $L_i v_T = [c_T(i)]_{\bar{q}} v_T + \cdots$, where the ellipsis indicates higher-order terms. Thus
\[
ev_n(f) v_T = f([c_T(1)]_{\bar{q}}, [c_T(2)]_{\bar{q}}, \ldots, [c_T(n)]_{\bar{q}}) |_{t=n} v_T + \cdots.
\]
However, by Proposition \ref{prop:scalar_action} we know that $ev_n(f)$ acts by scalar multiplication on $S^\lambda$ and hence on $v_T$. This means that the left hand side must be a multiple of $v_T$, and hence that the higher-order terms must vanish. In particular, the scalar we have multiplied $v_T$ by is
\[
f([c_T(1)]_{\bar{q}}, [c_T(2)]_{\bar{q}}, \ldots, [c_T(n)]_{\bar{q}}) |_{t=n} = f(\mathrm{cont}_{\bar{q}}(\lambda)) |_{t=n},
\]
and does not depend on which standard Young tableau $T$ was used for the following reason. Since $f$ is a symmetric function, the order of the contents as variables does not matter, and the multiset of contents is determined by the partition $\lambda$.
\end{proof}

\begin{lemma} \label{lemma:q-cont_core}
Two partitions $\lambda$ and $\mu$ have the same $e$-core if and only if $\mathrm{cont}_{\bar{q}}(\lambda) = \mathrm{cont}_{\bar{q}}(\mu)$ (where the elements of the multisets are contained in $\mathbb{F}$).
\end{lemma}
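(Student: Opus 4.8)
The plan is to reduce the statement about $e$-cores to a purely combinatorial statement about multisets of contents modulo $e$, using the description of contents via beta-numbers (first-column hook lengths), and then apply Lemma \ref{lemma:q_number_periodicity} to translate between ordinary contents and $\bar{q}$-contents. The key observation is that $\mathrm{cont}_{\bar{q}}(\lambda) = \mathrm{cont}_{\bar{q}}(\mu)$ as multisets in $\mathbb{F}$ if and only if, for every residue class $r \in \mathbb{Z}/e\mathbb{Z}$ (interpreting $\mathbb{Z}/e\mathbb{Z}$ as $\mathbb{Z}$ when $e = \infty$), the number of boxes of $\lambda$ with content $\equiv r$ equals the corresponding number for $\mu$. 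This last equivalence is exactly Lemma \ref{lemma:q_number_periodicity}: distinct residues mod $e$ give distinct $\bar{q}$-numbers, and congruent integers give equal $\bar{q}$-numbers, so the map $c \mapsto [c]_{\bar{q}}$ induces a bijection from $\mathbb{Z}/e\mathbb{Z}$ onto its image, and multiset equality upstairs is equivalent to agreement of the residue-counting functions.

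So it remains to show: $\lambda$ and $\mu$ have the same $e$-core if and only if the function $r \mapsto \#\{\text{boxes of } \lambda \text{ with content} \equiv r \bmod e\}$ agrees with the analogous function for $\mu$. I would prove this via the abacus/beta-number formalism (as in \cite{Macdonald}, Chapter 1.1, Example 8). Encode $\lambda$ by its set of beta-numbers $\beta_i = \lambda_i - i$ (for $i = 1, \dots, \ell$ with $\ell$ large enough, shifted suitably so the two partitions use the same $\ell$). Removing a border strip of size $e$ corresponds to moving one bead on the $e$-runner abacus back by one position, and the $e$-core corresponds to the configuration where all beads on each runner are pushed as far back as possible; thus the $e$-core is determined precisely by the number of beads on each of the $e$ runners. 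Meanwhile, the content generating function $\sum_{\text{boxes of } \lambda} x^{\text{content}}$ can be written in terms of the beta-set as $\sum_i (x^{\beta_i + \text{const}} + \cdots)$ — more precisely, grouping boxes by content and reducing mod $e$, the residue-counting function is an affine function of the bead-counts on the $e$ runners. Concretely, in each of the two displayed ways of removing a size-$e$ border strip in the worked example, the passage $\lambda \rightsquigarrow \lambda \setminus D$ changes the content multiset by removing $e$ consecutive integers (a full residue system mod $e$), so it does not change the residue-counting function; iterating, $\lambda$ and its $e$-core have the same residue-counting function, reducing everything to the claim that two partitions with no removable size-$e$ border strip (i.e. two $e$-cores) have equal residue-counting functions iff they are equal, which is immediate from the bijection between $e$-cores and bead-distributions on the $e$ runners together with the affine relation above.

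I would organize the write-up in two steps: Step 1, the easy ``modular reduction'' via Lemma \ref{lemma:q_number_periodicity}, handling the $e = \infty$ case separately (where $[m_1]_{\bar q} = [m_2]_{\bar q} \iff m_1 = m_2$, so $\mathrm{cont}_{\bar q}(\lambda) = \mathrm{cont}_{\bar q}(\mu) \iff \mathrm{cont}(\lambda) = \mathrm{cont}(\mu) \iff \lambda = \mu$, consistent with the fact that the $\infty$-core of a partition is itself); and Step 2, the abacus argument showing that the residue-counting function mod $e$ determines and is determined by the $e$-core. The main obstacle is Step 2: making the relationship between the content residue-counting function and the bead-distribution on the $e$ runners precise enough to be a genuine bijection rather than just a compatibility, and checking the bookkeeping of constants when the two partitions are padded to a common beta-set length. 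This is standard but needs care; alternatively, one can cite it directly, since the equivalence ``same $e$-core $\iff$ same number of boxes of each content mod $e$'' is well known (it is essentially the statement that the $e$-core is the obstruction to the content multiset mod $e$ being ``balanced'', and appears in treatments of the Nakayama conjecture). I would state it as a lemma with a short abacus proof or a pointer to \cite{Macdonald} and \cite{Mathas}.
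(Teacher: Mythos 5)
Your proposal is correct in outline but follows a genuinely different route from the paper. The paper works with the $\bar q$-content polynomial $c_{\lambda,\bar q}(x)=\prod_{P\in\mathrm{cont}(\lambda)}(x+[P]_{\bar q})$: a telescoping identity shows that $c_{\lambda,\bar q}$ determines the multiset $\{[\lambda_i-i]_{\bar q}\}$, whence (via Lemma \ref{lemma:q_number_periodicity} and \cite{Macdonald} Ch.~I, Ex.~8) the $e$-core; the converse is obtained by multiplying $c_{\mu,\bar q}(x)$ by $\prod_{i=1}^e(x+[i]_{\bar q})$ for each border strip, using unique factorisation in $\mathbb{F}[x]$. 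You instead use Lemma \ref{lemma:q_number_periodicity} at the outset to convert equality of $\bar q$-content multisets into equality of the content residue-counting functions modulo $e$, and then reduce to the classical combinatorial fact that (for partitions of equal size) having the same $e$-core is equivalent to having the same number of boxes in each content class mod $e$, proved or cited via beta-numbers/abacus. Your route avoids the polynomial and UFD machinery and isolates a citable classical statement; the paper's route stays self-contained over $\mathbb{F}$ and only needs the beta-number description of cores at one step. Note that both arguments (and indeed the lemma as stated) implicitly use $|\lambda|=|\mu|$ for the ``same core $\Rightarrow$ same contents'' direction; this is automatic in the other direction since equal multisets have equal cardinality, and is the setting of Corollary \ref{cor:Nakayama_conj}, so it is not a defect peculiar to your write-up.

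One step of your Step 2 needs tightening. After removing border strips, the residue-counting functions of $\lambda$ and of its $e$-core differ by a \emph{constant} (the $e$-weight) on every residue class; since the $e$-weights of $\lambda$ and $\mu$ are not known to agree a priori, equality of the residue functions of $\lambda$ and $\mu$ only tells you that the residue functions of the two cores differ by a constant vector, not that they are equal. So you need either the slightly stronger injectivity statement (distinct $e$-cores cannot have residue vectors differing by a constant multiple of the all-ones vector), or to first pin down the $e$-weights from $|\lambda|=|\mu|$ together with the cores, or simply to quote the standard equal-size formulation of the fact (as in \cite{Mathas} or the references around the Nakayama conjecture), which is exactly what is needed here. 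You flag this bookkeeping yourself and offer the citation as a fallback, so this is a fixable imprecision rather than a fatal gap; the $e=\infty$ case you dispose of correctly, though the assertion that $\mathrm{cont}(\lambda)$ determines $\lambda$ deserves its one-line justification (the diagonal lengths of the diagram are read off from the content multiset).
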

\begin{proof}
We modify the argument in \cite{Macdonald} Chapter 1.1, Example 11 by considering the $\bar{q}$-content polynomial
\[
c_{\lambda, \bar{q}}(x) = \prod_{P \in \mathrm{cont}(\lambda)}(x + [P]_{\bar{q}}).
\]
We briefly explain the steps.
\newline \newline \noindent
Suppose that $|\lambda| = n$. We pad $\lambda$ by trailing zeros so that it has a total of $n$ entries. Observing that the contents of the boxes in the $i$-th row of $\lambda$ are $1-i, 2-i, \ldots, \lambda_i-i$ and that $[m]_{\bar{q}} - 1 = \bar{q}[m-1]_{\bar{q}}$ we find that
\[
\frac{c_{\lambda, \bar{q}}(x)}{c_{\lambda, \bar{q}}(\bar{q}x-1)} =
\prod_{P \in \mathrm{cont}(\lambda)} \frac{x + [P]_{\bar{q}}}{\bar{q}x-1 + [P]_{\bar{q}}}
=
\prod_{P \in \mathrm{cont}(\lambda)} \frac{x + [P]_{\bar{q}}}{\bar{q}(x + [P-1]_{\bar{q}})}
=
(\bar{q})^{-n} \prod_{i=1}^{n}\frac{x+ [\lambda_i-i]_{\bar{q}}}{x + [-i]_{\bar{q}}}
\]
where in the last step we used the fact the terms for each row of $\lambda$ telescope. Now we may multiply by $\prod_{i=1}^n (x + [-i]_{\bar{q}})$ (which does not depend on $\lambda$) to conclude that $c_{\lambda, \bar{q}}(x)$ determines $\prod_{i=1}^{n} (x+ [\lambda_i-i]_{\bar{q}})$ and hence determines the multiset $[\lambda_i-i]_{\bar{q}}$ by unique factorisation in $\mathbb{F}[x]$. By Lemma \ref{lemma:q_number_periodicity}, this is equivalent to knowing the numbers $\lambda_i-i$ modulo $e$. By \cite{Macdonald} Chapter 1.1, Example 8, this is determines the $e$-core of $\lambda$. So the $\bar{q}$-contents of $\lambda$ determines $c_{\lambda, \bar{q}}(x)$, hence the left hand side of the above equation, hence also the $e$-core of $\lambda$
\newline \newline \noindent
Now we must show that the $e$-core determines the $\bar{q}$-contents of $\lambda$. For this we notice that if $\mu$ is obtained from $\lambda$ by removing a border strip of length $e$, then the contents of the boxes in the border strip $\lambda / \mu$ attain each residue class modulo $e$ exactly once. So we have
\[
c_{\lambda, \bar{q}}(x) = c_{\mu, \bar{q}}(x) \prod_{i=1}^e (x + [i]_{\bar{q}}),
\]
which is an equation in the unique factorisation domain $\mathbb{F}[x]$. Hence if $\tilde{\lambda}$ is the $e$-core of $\lambda$, we have
\[
c_{\lambda, \bar{q}}(x) = c_{\tilde{\lambda}, \bar{q}}(x) \left(\prod_{i=1}^e (x + [i]_{\bar{q}}) \right)^{\frac{|\lambda| - |\tilde{\lambda}|}{e}}.
\]
We conclude that if $\tilde{\lambda}$ (and the size $|\lambda|$) are known, so is $c_{\lambda, \bar{q}}(x)$, and hence the $q$-contents of $\lambda$.
\end{proof}

\begin{corollary}[\cite{Mathas} Corollary 5.38] \label{cor:Nakayama_conj}
Two Specht modules $S_{\mathbb{F}}^{\lambda}$ and $S_{\mathbb{F}}^{\mu}$ are in the same block if and only if the partitions $\lambda$ and $\mu$ have the same $e$-core.
\end{corollary}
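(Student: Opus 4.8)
The plan is to deduce Corollary~\ref{cor:Nakayama_conj} by combining the content evaluation formula of Theorem~\ref{thm:Hecke_content_evaluation} with the $e$-core criterion of Lemma~\ref{lemma:q-cont_core}, using the $q$-Farahat-Higman algebra as the supply of central elements. The key point is that ``being in the same block'' for two Specht modules means exactly ``agreeing on all central characters'', so I need enough central elements to separate blocks, and then I need to identify the information those central elements extract with the $e$-core.

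First I would recall that $S_{\mathbb{F}}^{\lambda}$ and $S_{\mathbb{F}}^{\mu}$ lie in the same block of $H_{n,\mathbb{F}}(\bar q)$ if and only if they have the same central character, i.e.\ every $z \in Z(H_{n,\mathbb{F}}(\bar q))$ acts by the same scalar on both. (This is standard: two simple modules, or here two Specht modules whose composition factors determine their block, lie in the same block precisely when no central element separates them; one can phrase it via the block idempotents being orthogonal.) Next, I would observe that the image of $ev_n$ reduced to $\mathbb{F}$ — that is, evaluation of symmetric functions in $\mathcal{R}[q,q^{-1}]\otimes_{\mathbb{Z}}\Lambda$ at the JM elements $L_1,\dots,L_n$ with $t=n$ — exhausts $Z(H_{n,\mathbb{F}}(\bar q))$. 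This is because $ev_n = \Phi_{n,q}\circ\Psi_q$ by Proposition~\ref{prop:triangular_CD}, $\Psi_q$ is surjective (indeed an isomorphism), and $\Phi_{n,q}$ is surjective by Proposition~\ref{prop:specisalisation_hom}; base-changing to $\mathbb{F}$ preserves surjectivity. So it suffices to check when all elements $ev_n(f)$, $f \in \mathcal{R}[q,q^{-1}]\otimes_{\mathbb{Z}}\Lambda$, act by equal scalars on $S_{\mathbb{F}}^{\lambda}$ and $S_{\mathbb{F}}^{\mu}$.

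Now I would apply Theorem~\ref{thm:Hecke_content_evaluation}: the scalar by which $ev_n(f)$ acts on $S_{\mathbb{F}}^{\lambda}$ is $f(\mathrm{cont}_{\bar q}(\lambda))|_{t=n}$, and likewise for $\mu$. Hence the two Specht modules are in the same block if and only if $f(\mathrm{cont}_{\bar q}(\lambda)) = f(\mathrm{cont}_{\bar q}(\mu))$ for every symmetric function $f$ (the $t=n$ specialisation being harmless since $n$ is fixed). Since symmetric functions separate multisets of equal size over a field — e.g.\ the power sums, or the generating function $\prod_i (1 + y\,x_i)$ whose coefficients are the $e_r$, recover the multiset by unique factorisation — this holds for all $f$ precisely when $\mathrm{cont}_{\bar q}(\lambda) = \mathrm{cont}_{\bar q}(\mu)$ as multisets in $\mathbb{F}$. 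Finally, Lemma~\ref{lemma:q-cont_core} identifies this last condition with $\lambda$ and $\mu$ having the same $e$-core, completing the proof.

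\emph{The main obstacle} I anticipate is the first reduction: justifying carefully that ``same block $\iff$ same central character'' for Specht modules in the possibly non-semisimple algebra $H_{n,\mathbb{F}}(\bar q)$, and that the image of $Z(H_n(q))$ under base change to $\mathbb{F}$ is all of $Z(H_{n,\mathbb{F}}(\bar q))$ (a priori base change of the centre maps into, but need not surject onto, the centre of the base-changed algebra). The cleanest route is probably to avoid claiming surjectivity onto the full centre and instead argue directly: if $S^\lambda_{\mathbb{F}}$ and $S^\mu_{\mathbb{F}}$ have the same $e$-core, the content evaluation shows the central elements $ev_n(f)$ cannot separate them, and one invokes the known fact (or the structure of the block idempotents, which are themselves limits/combinations of such central elements) that these already detect the block decomposition; conversely if the $e$-cores differ, Lemma~\ref{lemma:q-cont_core} produces an $f$ with $f(\mathrm{cont}_{\bar q}(\lambda)) \neq f(\mathrm{cont}_{\bar q}(\mu))$, exhibiting a central element acting by different scalars, so the modules cannot share a block. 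Pinning down precisely which prior result supplies the ``$\Leftarrow$'' direction — that agreement on the specific central elements $ev_n(e_r)$ forces the same block — is where I would be most careful, likely citing the semisimplicity over $\mathbb{Q}(q)$ and a lifting/reduction argument, or the explicit description of block idempotents in \cite{Mathas}.
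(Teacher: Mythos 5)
Your proposal is correct and follows essentially the same route as the paper: same block $\iff$ same central character, the centre of $H_{n,\mathbb{F}}(\bar q)$ spanned by $ev_n(\Lambda)$, Theorem \ref{thm:Hecke_content_evaluation} to convert central characters into evaluations at $\mathrm{cont}_{\bar q}$, separation of multisets via the generating function $\prod_P(1+[P]_{\bar q}x)$ and unique factorisation in $\mathbb{F}[x]$, and finally Lemma \ref{lemma:q-cont_core}. The point you flag as the main obstacle is simply asserted in the paper (it relies on the fact, cited from \cite{FrancisGraham}/\cite{GeckRouquier}, that symmetric polynomials in the JM elements span the centre over any base field), so your extra caution there is reasonable but does not change the argument.
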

\begin{proof}
Being the same block is equivalent to having the same central character, i.e. that every element of $Z(H_{\mathbb{F},n}(\bar{q}))$ should act by the same scalar on $S_{\mathbb{F}}^{\lambda}$ and $S_{\mathbb{F}}^{\mu}$. The centre is spanned over $\mathbb{F}$ by $ev_n(\Lambda)$, and $\Lambda$ is generated by the elementary symmetric functions $e_r$, so it is equivalent that each $ev_n(e_r)$ should act by the same scalar on $S_{\mathbb{F}}^{\lambda}$ as on $S_{\mathbb{F}}^{\mu}$. This in turn is equivalent to the following equality of generating functions:
\[
\sum_{r \geq 0} e_r(\mathrm{cont}_{\bar{q}}(\lambda)) x^r = \sum_{r \geq 0} e_r(\mathrm{cont}_{\bar{q}}(\mu)) x^r.
\]
Note that these power series are actually finite (the terms for $r>n$ are zero), and we have the following factorisation:
\[
\sum_{r \geq 0} e_r(\mathrm{cont}_{\bar{q}}(\lambda)) x^r = \prod_{P \in \mathrm{cont}(\lambda)} (1 + [P]_{\bar{q}}x),
\]
and similarly for $\mu$. Since $\mathbb{F}[x]$ is a unique factorisation domain, the polynomials for $\mu$ and $\nu$ coincide when the factors coincide. This is equivalent to $\mathrm{cont}_{\bar{q}}(\lambda)$ and $\mathrm{cont}_{\bar{q}}(\mu)$ coinciding as multisets (where the elements are contained in $\mathbb{F}$). But since $\lambda$ and $\mu$ have the same size, Lemma \ref{lemma:q-cont_core} implies this condition is equivalent to $\lambda$ and $\mu$ having the same $e$-cores.
\end{proof}
\noindent
Finally, we present a result on the action of the Geck-Rouquier basis on Specht modules. For symmetric groups, there is the notion of a \emph{class symmetric function} \cite{CorteelGoupilSchaeffer}. These are elements of $\mathcal{R} \otimes_{\mathbb{Z}} \Lambda$, whose evaluation at $\mathrm{cont}(\lambda)$ and $t=n$ gives the scalar by which a conjugacy-class sum (which is a central element) acts on the Specht module indexed by $\lambda$. The connection of these to the work of Farahat and Higman is explained in Section 3 of \cite{Ryba}. We construct analogous symmetric functions, replacing symmetric groups by Iwahori-Hecke algebras, conjugacy-class sums by the Geck-Rouquier basis elements, and contents by $q$-contents.

%In \cite{Ram}, an analogue of the Murnaghan-Nakayama rule (which computes symmetric group characters) is found. It gives the trace of $T_{w}$ on an irreducible representation of $H_n(q)$, where $w$ is assumed to be of minimal length within its conjugacy class (and explains how to reduce the the computation of the trace for arbitrary $w$ to this case). 

\begin{definition}
Let $f_{\mu} = \Psi_q^{-1}(K_{\mu, q})$, which is an element of $\mathcal{R}[q,q^{-1}] \otimes_{\mathbb{Z}} \Lambda$.
\end{definition}

\begin{proposition} \label{prop:Hecke_char_sym_fn}
Let $\lambda$ be a partition of $n$, and assume that $|\mu| + l(\mu) \leq n$. The scalar by which the Geck-Rouquier basis element $\Gamma_{\mu}$ acts on the Specht module labelled by $\lambda$ is equal to the evaluation of $f_\mu$ at $\mathrm{cont}_{\bar{q}}(\lambda)$ and $t=n$.
\end{proposition}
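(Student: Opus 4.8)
The plan is to combine the commutative triangle from Proposition~\ref{prop:triangular_CD} with the content-evaluation formula of Theorem~\ref{thm:Hecke_content_evaluation}. By definition, $f_\mu = \Psi_q^{-1}(K_{\mu,q})$, so applying $\Psi_q$ gives $\Psi_q(f_\mu) = K_{\mu,q}$, and then applying $\Phi_{n,q}$ and using the commutativity of the triangle yields
\[
ev_n(f_\mu) = \Phi_{n,q}(\Psi_q(f_\mu)) = \Phi_{n,q}(K_{\mu,q}) = \Gamma_\mu,
\]
where we use that $|\mu| + l(\mu) \leq n$ so that $\Gamma_\mu$ is not interpreted as zero. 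Thus $\Gamma_\mu$ is exactly the image under $ev_n$ of the element $f_\mu \in \mathcal{R}[q,q^{-1}] \otimes_{\mathbb{Z}} \Lambda$.

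Once $\Gamma_\mu = ev_n(f_\mu)$ is established, the result is immediate from Theorem~\ref{thm:Hecke_content_evaluation} applied to $f = f_\mu$: the scalar by which $ev_n(f_\mu)$ acts on the Specht module $S^\lambda$ equals $f_\mu$ evaluated at the $\bar{q}$-contents of $\lambda$ with $t = n$, which is precisely the claimed formula. I would also note that Proposition~\ref{prop:scalar_action} guarantees $\Gamma_\mu$ acts by a genuine scalar, so the statement is well-posed.

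The only subtlety, and the step I would treat most carefully, is a base-change bookkeeping point: $f_\mu$ and $\Psi_q$ are defined over $\mathbb{Z}[q,q^{-1}]$, whereas the Specht module in the statement of Theorem~\ref{thm:Hecke_content_evaluation} is really $S^\lambda_{\mathbb{F}}$ over a field $\mathbb{F}$ with distinguished element $\bar{q}$ (this is the setting in which $[c]_{\bar{q}}$ and $\mathrm{cont}_{\bar{q}}(\lambda)$ make sense). So I would phrase the argument as: compute $ev_n(f_\mu) = \Gamma_\mu$ over $\mathbb{Z}[q,q^{-1}]$, base change along $\varphi: \mathbb{Z}[q,q^{-1}] \to \mathbb{F}$ sending $q \mapsto \bar{q}$, and observe that both $ev_n$ and the content-evaluation of $f_\mu$ are compatible with this base change because the JM elements $L_i$ and the structure of $\mathcal{R}[q,q^{-1}] \otimes_\mathbb{Z} \Lambda$ specialise as expected. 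No genuine obstacle arises here; it is purely a matter of checking that the identity $\Gamma_\mu = ev_n(f_\mu)$ survives specialisation of $q$, which it does since it is an identity of elements of $H_n(q)$ itself.
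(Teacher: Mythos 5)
Your argument is exactly the paper's proof: identify $\Gamma_\mu = \Phi_{n,q}(K_{\mu,q}) = \Phi_{n,q}(\Psi_q(f_\mu)) = ev_n(f_\mu)$ via the commutative triangle of Proposition~\ref{prop:triangular_CD}, then invoke Theorem~\ref{thm:Hecke_content_evaluation}. The extra remarks on the hypothesis $|\mu|+l(\mu)\leq n$ and on base change to $\mathbb{F}$ are correct bookkeeping that the paper leaves implicit, but they do not change the route.
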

\begin{proof}
We have that 
\[
\Gamma_\mu = \Phi_{n,q}(K_{\mu, q}) = \Phi_{n,q}(\Psi_q(f_\mu)) = ev_n(f_\mu).
\]
Now we apply Theorem \ref{thm:Hecke_content_evaluation} to obtain the scalar by which $\Gamma_\mu$ acts on the Specht module indexed by $\lambda$.
\end{proof}

\begin{example}
We have the following equations:
\begin{eqnarray*}
\Psi_q(e_1) &=& \Gamma_{(1)} \\
\Psi_q(e_2) &=& \Gamma_{(1,1)} + \Gamma_{(2)} \\
\Psi_q(e_1^2) &=& (q+q^{-1})\Gamma_{(1,1)} + (q^2 + 1 + q^{-2}) \Gamma_{(2)} + (q-1)(n-1) \Gamma_{(1)} + q{n \choose 2}
\end{eqnarray*}
where the first two equations are from Theorem \ref{thm:elem_JM-eval} and the last equation follows from Example \ref{ex:Hecke_example}. This system of equations gives
\begin{eqnarray*}
f_{(1)} &=& e_1 \\
f_{(2)} &=& e_1^2 - (q+q^{-1})e_2 - (q-1)(n-1)e_1 - q{n \choose 2} \\
f_{(1,1)} &=& (q+1+q^{-1})e_2 - e_1^2 + (q-1)(n-1)e_1 + q{n \choose 2}.
\end{eqnarray*}
Together with Proposition \ref{prop:Hecke_char_sym_fn}, this gives formulae for the action of the Geck-Rouquier basis elements $\Gamma_{(1)}$, $\Gamma_{(2)}$ and $\Gamma_{(1,1)}$ on Specht modules. When we set $q=1$, we recover Example 3.10 of \cite{Ryba} for the symmetric groups.
\end{example}
\bibliographystyle{alpha}
\bibliography{ref.bib}

\end{document}